\newtheorem{theorem}{Theorem}
\theoremstyle{plain}
\newtheorem{corollary}[theorem]{Corollary}
\newtheorem{definition}[theorem]{Definition}
\newtheorem{example}[theorem]{Example}
\newtheorem{remark}[theorem]{Remark}
\renewcommand\endproof{\hfill $\Box$\vskip 0.15in}
\def\pr{\noindent {\bf {Proof.\ }}}
\numberwithin{equation}{section}
\numberwithin{theorem}{section}
\newcommand\bel[1]{\begin{equation}\label{#1}}
\newcommand\ee{\end{equation}}
\newcommand\und[1]{{\tt{#1}}}
\newcommand{\ol}[1]{{\overline{#1}}}
\newcommand{\wt}[1]{{\widetilde{#1}}}
\newcommand{\bD}{{\mathbb{D}}}
\newcommand{\bR}{{\mathbb{R}}}
\newcommand{\bE}{{\mathbb{E}}}
\newcommand{\bP}{{\mathbb{P}}}
\newcommand{\bI}{{\mathbb{I}}}
\newcommand{\bA}{{\mathbf{V}}}
\newcommand{\bH}{{\mathbf{H}}}
\newcommand{\HX}{{\mathbb{H}}_{\mfX}}
\newcommand{\HXt}{\HX^{(1)}}
\newcommand{\HB}{{\mathbb{H}}_{\mfB}}
\newcommand{\HBt}{{\HB^{(1)}}}
\newcommand{\cF}{{\mathcal{F}}}
\newcommand{\cI}{{\mathcal{I}}}
\newcommand{\cJ}{{\mathcal{J}}}
\newcommand{\cK}{{\mathcal{K}}}
\newcommand{\cP}{{\mathcal{P}}}
\newcommand{\cR}{{\mathcal{R}}}
\newcommand{\mfX}{{\mathfrak{X}}}
\newcommand{\mfB}{{\mathfrak{B}}}
\def\Ito{It\^{o}}
\def\bola{\boldsymbol{\alpha}}
\def\bole{\boldsymbol{\epsilon}}
\def\boz{\boldsymbol{(0)}}
\def\nk{\mathfrak{K}}
\begin{document}

\today

\title[Stochastic Integrals and Evolution Equations]
{Stochastic Integrals and Evolution Equations with Gaussian Random
Fields}
\author{S. V. Lototsky}
\curraddr[S. V. Lototsky]{Department of Mathematics, USC\\
Los Angeles, CA 90089}
\email[S. V. Lototsky]{lototsky@math.usc.edu}
\urladdr{http://www-rcf.usc.edu/$\sim$lototsky}
\author{K. Stemmann}
\curraddr[K. Stemmann]{Department of Mathematics, USC\\
Los Angeles, CA 90089}
\email[K. Stemmann]{stemmann@usc.edu}
\thanks{Research was partially supported by the NSF Grant
DMS-0237724}
 \subjclass[2000]{Primary 60H05; Secondary 60G15, 60H07, 60H40}
 \keywords{Chaos Expansion, Closed-Form Solutions,
 Fractional Brownian Motion,
  Generalized Random
Fields, Malliavin Calculus, Non-explosion,  Wick Product}

\begin{abstract}
The paper studies stochastic integration with respect to Gaussian processes and
fields. It is more convenient to work with a field than a process:
 by definition, a field is
  a collection of stochastic integrals for a class of deterministic integrands.
  The problem is then to extend the definition to random integrands.
 An orthogonal decomposition of the
chaos space of the random field, combined with the
Wick product,  leads to
 the \Ito-Skorokhod integral,
  and provides an efficient tool to
study the integral, both analytically and numerically.
For a Gaussian process, a natural definition of the integral follows
from  a canonical
correspondence between random processes and a special class of random
fields. Some examples of the corresponding stochastic differential
equations are also considered.
\end{abstract}

\maketitle

\section{Introduction}
While stochastic integral with respect to a standard Brownian
motion is a well-studied object,  integration
with respect to other Gaussian processes is currently an area of active research,
and the  fractional Brownian motion is receiving most of the attention
\cite[etc.]{AMN1,A,DH,DU,DHP,KBR,Lin,PT}
The objective of this paper is
to define and investigate stochastic
integrals with respect to arbitrary
Gaussian processes and fields using chaos expansion.

A generalized Gaussian field $\mfX$ over a Hilbert space $\bH$ is a continuous linear
mapping $f\mapsto \mfX(f)$ from $\bH$ to the space of Gaussian random variables.
 The corresponding
chaos space $\HX$ is the Hilbert space of
square integrable random variables that are measurable with respect to the
sigma-algebra generated by $\mfX(f),\ f\in \bH$. The  chaos expansion
is an orthogonal decomposition of $\HX$: given an orthonormal
basis $\{\xi_m,\, m\geq 1\}$ in $\HX$, a square integrable
$\bH$-valued random variable $\eta$ has the chaos expansion
$\eta=\sum_{m\geq 1} \eta_m\xi_m$, with $\eta_m=\bE(\eta\xi_m)\in \bH$.

The definition of a generalized Gaussian field $\mfX$ already provides the
stochastic integral $\mfX(f)$ for non-random $f\in \bH$. As a result, given the
chaos expansion of a random element $\eta$ from $\HX$,
the definition of the stochastic integral $\mfX(\eta)$
requires an extension of the linearity property of $\mfX$ to linear
combinations with random coefficients.
An extensions of this
property using the Wick product lead to the \Ito-Skorokhod integral
$\mfX^{\diamond}(f)$;
see Definition \ref{def:main} below.
Under some conditions, the integral coincides with the
divergence operator (the adjoint of the Malliavin derivative)
 on the chaos space $\HX$.

Even for non-random $f$ there are
often several ways of computing $\mfX(f)$.
It is most convenient to work with a white noise over
 $\bH$, that is, a zero-mean generalize Gaussian field
such that $\bE\big(\mfX(f)\mfX(g)\big)=(f,g)_{\bH}$ for all $f,g\in \bH$.
It turns out that, for  every zero-mean Gaussian field $\mfX$ over $\bH$,
 there exists a different (usually larger) Hilbert space $\bH'$ such that $\mfX$ is a
white noise over $\bH'$.
Moreover, the space $\bH'$ is uniquely determined by $\mfX$.
On the other hand, every
 zero-mean Gaussian field $\mfX$ over $\bH$ can be written in the form
 $\mfX(f)=\mfB(\cK^*f)$, $f\in \bH$,
 where $\cK^*$ is a bounded linear operator on $\bH$ and $\mfB$ is a white noise
 over $\bH$, although this white noise
 representation of $\mfX$ is not necessarily unique.
 Thus, different white noise representations of $\mfX$ lead to
 different formulas for computing $\mfX(f),$ and the
 chaos expansion is an efficient way for deriving those formulas.
 In particular, for both deterministic and random $f$, chaos expansion
 provides an explicit formula for $\mfX(f)$ in terms of the Fourier
 coefficients of the integrand $f$.

To define stochastic integral with respect to a
 Gaussian process $X=X(t)$, $t\in [0,T]$,
we construct a Hilbert space $\bH_X$ and a white noise $\mfB$ over
$\bH_X$ such that
 $X(t)=\mfB(\chi_t)$, where $\chi_t$ is the indicator
function of the interval $[0,t]$. The space $\bH_X$ is
 uniquely determined by $X$;
for example, the Wiener process on $(0,T)$ has $\bH_X=L_2((0,T))$.
 Then the equality
\bel{eq:def1}
\int_0^Tf(s)\diamond dX(s)=\mfB^{\diamond}(f),\ f\in \HB,
\ee
  defines the stochastic integral with respect to $X$.

  In some situations, given a Gaussian process $X=X(t),\ t\in [0,T]$,
  it is possible to find a
generalized Gaussian field $\mfX$ over a Hilbert space
$\bH$ so that $X(t)=\mfX(\chi_t)$. Even though  $\mfX$ is not
necessarily a white noise over $\bH$, the resulting definition
of the stochastic integral,
$$
\int_0^Tf(t)\diamond dX(t)=\mfX^{\diamond}(f),
$$
 coincides with the (\ref{eq:def1}), while
the space $\bH$ can be more
convenient for computations than the space $\bH_X$. For example,
fractional Brownian motion  with the Hurst parameter bigger then $1/2$
has a rather complicated space  $\bH_X$, but
can be represented using a generalized Gaussian field over
$\bH=L_2((0,T)).$

The paper is organized as follows.
Section \ref{sec:BG} provides background on generalized
Gaussian fields, the chaos expansion, and the Wick product.
Section \ref{sec:connect} establishes a connection
between Gaussian processes and fields.
 Section \ref{sec:SI} investigates the \Ito-Skorokhod
 stochastic integral. Section \ref{sec:see1} studies
the corresponding differential equations that admit a closed
form solution. Section \ref{sec:see2} studies more general
stochastic evolution equations and establishes the corresponding
stochastic parabolicity conditions.

\newpage

The main contributions of the paper are:
\begin{enumerate}
\item  A connection between  ge\-ne\-ra\-lized Gaussian
fields over $L_2((0,T))$ and processes that are representable in the
form $\int_0^t K(t,s)dW(s)$ (Theorem \ref{th:adapt});
\item Chaos expansions of the \Ito-Skorokhod
integral (Theorem \ref{th:st-in-ce});
\item Investigation of the equation $u(t)=1+\int_0^tu(s)\diamond
dX(s)$ for
a class of Gaussian random processes $X$ (Theorem \ref{th:sode1}).
\item A generalization of the stochastic parabolicity condition
(Theorem \ref{th:main-par}).
\end{enumerate}

 In particular, we establish the following result.

 \begin{theorem}
Let $\mfX$ be a zero-mean generalized Gaussian field over
$L_2((0,T))$ and $X(t)=\mfX(\chi_t)$. Then

(a) The solution of the \Ito-Skorokhod equation
$$
u(t)=1+\int_0^tu(s)\diamond dX(s)
$$ is unique in the class of square integrable $\cF^X$-measurable
processes and is given by
$$
u(t)=e^{X(t)-\frac{1}{2}\bE X^2(t)}.
$$

(b) The \Ito-Skorokhod partial differential equation
$$
du(t,x)=a\int_0^tu_{xx}(s,x)ds+\sigma\, u_x(t,x)\diamond dX(t)
$$
is well-posed in $L_2(\Omega; L_2(\mathbb{R}))$ if and only if
$$
at\geq\frac{\sigma^2}{2}\,\mathbb{E}X^2(t).
$$
\end{theorem}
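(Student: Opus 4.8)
The plan is to reduce everything to the chaos‑expansion machinery of Sections 2–3, treating the It\^o--Skorokhod integral as the Wick‑product‑based extension $\mfX^\diamond$ and exploiting the fact that for $X(t)=\mfX(\chi_t)$ the integral against $X$ coincides with $\mfX^\diamond$ applied to the integrand. For part (a), I would first make the heuristic precise: if a solution $u$ exists in the class of square‑integrable $\cF^X$‑measurable processes, expand $u(t)=\sum_{\bola} u_\bola(t)\xi_\bola$ in the basis of Wick exponentials (Wick powers of $\mfX(\chi_t)$, equivalently Hermite polynomials in the Gaussian variables generating $\cF^X$). The Wick product turns the integral equation into a triangular system of deterministic integral equations for the coefficients $u_\bola(t)$, which one solves recursively; the solution one obtains is exactly the chaos expansion of the Wick exponential $\exp^\diamond\{X(t)\}=e^{X(t)-\frac12\bE X^2(t)}$, using the standard identity that the Wick exponential of a Gaussian has ordinary exponential form with the variance correction. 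Conversely I would verify directly that $u(t)=e^{X(t)-\frac12\bE X^2(t)}$ solves the equation: since $\mfX^\diamond(f)$ coincides with the divergence/Skorokhod operator under the relevant integrability hypothesis, one applies the product rule / integration‑by‑parts formula for the Skorokhod integral (or simply differentiates the chaos expansion term by term) to get $d u(t)= u(t)\diamond dX(t)$ with $u(0)=1$. Uniqueness then follows because the triangular system has a unique solution at each chaos level, and square integrability of $u$ together with $\cF^X$‑measurability forces $u$ to lie in $\HX$, so its chaos coefficients are determined.

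For part (b), I would again pass to chaos coefficients, now in the spatial‑plus‑stochastic setting. Write $u(t,x)=\sum_\bola u_\bola(t,x)\xi_\bola$; the Wick structure of the term $\sigma u_x(t,x)\diamond dX(t)$ again produces a lower‑triangular (in the length of $\bola$) system of deterministic PDEs: at the zeroth level $\partial_t u_\boz = a\,\partial_{xx} u_\boz$ with the given initial data, and at higher levels an inhomogeneous heat equation whose forcing involves $\partial_x u_\beta$ for $\beta$ one step shorter, integrated against the (deterministic) kernel coming from $\mfX$. Passing to the spatial Fourier transform turns each level into an ODE in $t$ for $\widehat{u}_\bola(t,y)$, driven by $-a y^2$ and by $i\sigma y$ times lower‑order terms, so one gets explicit formulas; summing $\sum_\bola \|u_\bola(t,\cdot)\|_{L_2(\mathbb{R})}^2$ and using Parseval reduces well‑posedness in $L_2(\Omega;L_2(\mathbb{R}))$ to the convergence of an explicit series in $y$ and $t$. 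The exponents that appear are governed by $-a y^2 t + \tfrac{\sigma^2}{2} y^2 \bE X^2(t)$ (the $\sigma^2$ term is the Wick correction accumulated over the chaos expansion), so the series converges for all $t$ in the relevant range exactly when $a t - \tfrac{\sigma^2}{2}\bE X^2(t)\ge 0$, and diverges otherwise; this is the stochastic parabolicity condition. I expect the main obstacle to be the bookkeeping in part (b): correctly identifying the combinatorial factors in the triangular system so that the summation over chaos levels produces precisely the claimed exponential $\tfrac{\sigma^2}{2}\bE X^2(t)$ rather than some other multiple, and handling the case $a t = \tfrac{\sigma^2}{2}\bE X^2(t)$ (borderline parabolicity) with enough care that the solution is still in $L_2$. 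The verification that $\mfX^\diamond$ may be replaced by the Skorokhod integral, and that term‑by‑term differentiation of the chaos expansion is legitimate, is routine given the results of Section \ref{sec:SI}, so I would cite those rather than reprove them.
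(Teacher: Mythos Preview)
Your plan for part (a) is exactly what the paper does: chaos-expand $u(t)=\sum_{\alpha}u_{\alpha}(t)\xi_{\alpha}$, obtain the lower-triangular system
\[
u_{\alpha}(t)=\sum_{k\geq 1}\sqrt{\alpha_k}\int_0^t u_{\alpha-\epsilon_k}(s)\,(\cK m_k)(s)\,ds,
\]
solve it by induction to get $u_{\alpha}(t)=\wt{M}^{\alpha}(t)/\sqrt{\alpha!}$, and recognize this as the chaos expansion of $e^{\diamond X(t)}=e^{X(t)-\frac12\bE X^2(t)}$; uniqueness is the observation that the homogeneous triangular system has only the zero solution.

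For part (b) your approach is correct but slightly longer than necessary. The paper reverses your order of operations: it takes the spatial Fourier transform \emph{first}, obtaining for each fixed frequency $y$ the scalar equation
\[
\widehat{u}(t,y)=\widehat{u}_0(y)-ay^2\int_0^t\widehat{u}(s,y)\,ds+\mathfrak{i}\sigma y\,\mfX_t^{\diamond}(\widehat{u}(\cdot,y)),
\]
and then applies part (a) (in the form of its corollary with drift) directly, yielding the closed form
\[
\widehat{u}(t,y)=\widehat{u}_0(y)\exp\!\Big(-ay^2t+\tfrac{1}{2}\sigma^2y^2\bE X^2(t)+\mathfrak{i}y\sigma X(t)\Big).
\]
Since $|\widehat{u}(t,y)|^2=|\widehat{u}_0(y)|^2\exp\big(-2y^2(at-\tfrac{\sigma^2}{2}\bE X^2(t))\big)$ is deterministic, the $L_2(\Omega;L_2(\bR))$ condition can be read off immediately, and the borderline case is handled for free. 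Doing it this way spares you the combinatorial bookkeeping you flagged as the main obstacle: you never have to track the chaos-level factors in the PDE hierarchy, because they were already summed once and for all in part (a). Your route---chaos-expand first, Fourier-transform each $u_{\alpha}$, then resum---would of course give the same answer, since the two operations commute.
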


\section{Generalized Gaussian Fields: A Background}
\label{sec:BG}

Let $(\Omega,\,\cF,\, \bP)$ be a probability space and $\bA$, a
linear topological space over the real numbers $\bR$.
Everywhere in this
paper, we assume that
the probability space is rich enough to support all the random
elements we might need.

\begin{definition}\label{Ch1:def1}
(a)  A \und{generalized random field} over $\bA$ is a
mapping \\
$\mfX: \Omega\times \bA \to \bR$ with the following properties:
\begin{enumerate}
\item For every $f\in \bA$, $\mfX(f)=\mfX(\cdot,f)$ is a random variable;
\item For every $\alpha, \beta\in \bR$ and $f,g\in \bA$,
$\mfX(\alpha f + \beta g)=\alpha \mfX(f)+\beta \mfX(g)$;
\item If \ $\lim\limits_{n\to \infty} f_n=f$ in the topology of $\bA$, then
$\lim\limits_{n\to \infty} \mfX(f_n)=\mfX(f)$ in probability.
\end{enumerate}

(b)  A generalized random field $\mfX$ is called
\begin{itemize}
\item \und{zero-mean}, if $\ \bE \mfX(f)=0$ for all $f\in \bA$;
\item  \und{Gaussian},
 if the random variable $\mfX(f)$ is Gaussian for
every $f\in \bA$.
\end{itemize}
\end{definition}

{\sc For Example}, if $W=W(t),\, 0\leq t\leq T,$ is a standard Brownian motion on
$(\Omega, \, \cF,\, \bP)$, then $\mfX(f)=\int_0^Tf(t)dW(t)$ is
a zero-mean generalized Gaussian field over $L_2((0,T))$; note that
\bel{ch1.1}
\bE|\mfX(f_n)-\mfX(f)|^2=\int_0^T|f_n(t)-f(t)|^2dt.
\ee
More generally, if ${\mathcal{M}}$ is  a bounded linear operator on $L_2((0,T))$,
then
\bel{ch1.2}
\mfX(f)=\int_0^T({\mathcal{M}}f)(t)dW(t)
\ee
is a zero-mean generalized Gaussian field over $L_2((0,T))$.
In fact, by
 Theorem \ref{th:wn11}(b) below,
every zero-mean generalized Gaussian field over $L_2((0,T))$
can be represented in the
form (\ref{ch1.2}) with suitable ${\mathcal{M}}$ and $W$.
We will also see
that the fractional Brownian motion on $[0,T]$
with Hurst parameter bigger than $1/2$
can be interpreted as a zero-mean generalized Gaussian field over $L_2((0,T)).$

 Let $\bH$ be a real Hilbert space with inner product $(\cdot, \cdot)_{\bH}$ and
 norm $\|\cdot\|_{\bH}=\sqrt{(\cdot,\cdot)_{\bH}}$. The following result
 is a direct consequence of the Riesz
Representation Theorem.

\begin{theorem}\label{th:gf}
For every zero-mean generalized Gaussian field $\mfX$  over a Hilbert space $\bH$,
 there exists a unique bounded linear self-adjoint operator $\cR$ on $\bH$ such that
\bel{ch1.3}
\bE\big( \mfX(f)\mfX(g)\big)=(\cR f,g)_{\bH},\ f,g \in \bH.
\ee
\end{theorem}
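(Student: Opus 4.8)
The plan is to exploit the fact that the bilinear form $B(f,g) := \mathbb{E}\big(\mfX(f)\mfX(g)\big)$ is well-defined on $\bH \times \bH$, is symmetric, and — crucially — is bounded. Once boundedness is established, the operator $\cR$ will come from applying the Riesz representation theorem to the bounded linear functional $g \mapsto B(f,g)$ for each fixed $f$, and self-adjointness will be immediate from the symmetry $B(f,g) = B(g,f)$.

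First I would check that $B$ is well-defined and bilinear. For fixed $f,g \in \bH$ the random variables $\mfX(f)$ and $\mfX(g)$ are jointly Gaussian (indeed $\mfX(f)+\mfX(g) = \mfX(f+g)$ is Gaussian, so the pair is jointly Gaussian), hence have moments of all orders, so the product is integrable and $B(f,g)$ is finite. Bilinearity follows directly from property (2) in Definition \ref{Ch1:def1}, and symmetry is obvious.

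The main obstacle is proving that $B$ is bounded, i.e. that there is a constant $C$ with $|B(f,g)| \le C\|f\|_{\bH}\|g\|_{\bH}$; equivalently, since a symmetric bilinear form is bounded iff the associated quadratic form $q(f) := B(f,f) = \mathbb{E}\,\mfX(f)^2$ is bounded on the unit ball, it suffices to bound $\mathbb{E}\,\mfX(f)^2$ by $C\|f\|_{\bH}^2$. Here is where continuity (property (3)) enters, via a closed-graph / uniform-boundedness argument. Consider the linear map $T : \bH \to L^2(\Omega)$ given by $Tf = \mfX(f)$; it is well-defined since each $\mfX(f) \in L^2$. I claim $T$ has closed graph: if $f_n \to f$ in $\bH$ and $\mfX(f_n) \to \zeta$ in $L^2(\Omega)$, then by property (3) $\mfX(f_n) \to \mfX(f)$ in probability, and the $L^2$-limit $\zeta$ also equals this limit in probability, so $\zeta = \mfX(f)$ a.s. Since $\bH$ and $L^2(\Omega)$ are Banach spaces, the closed graph theorem gives that $T$ is bounded: $\|\mfX(f)\|_{L^2} \le C\|f\|_{\bH}$. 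Then $|B(f,g)| \le \|\mfX(f)\|_{L^2}\|\mfX(g)\|_{L^2} \le C^2\|f\|_{\bH}\|g\|_{\bH}$ by Cauchy--Schwarz in $L^2(\Omega)$.

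With boundedness in hand the rest is routine. For each fixed $f$, the map $g \mapsto B(f,g)$ is a bounded linear functional on $\bH$, so by the Riesz representation theorem there is a unique element, call it $\cR f$, with $B(f,g) = (\cR f, g)_{\bH}$ for all $g$. Linearity of $f \mapsto \cR f$ follows from bilinearity of $B$ together with uniqueness in Riesz; boundedness ($\|\cR f\|_{\bH} \le C^2 \|f\|_{\bH}$) follows by taking $g = \cR f$. Self-adjointness is $(\cR f, g)_{\bH} = B(f,g) = B(g,f) = (\cR g, f)_{\bH} = (f, \cR g)_{\bH}$. Uniqueness of $\cR$: if $\cR'$ also satisfies \eqref{ch1.3}, then $(\cR f - \cR' f, g)_{\bH} = 0$ for all $g$, so $\cR f = \cR' f$ for all $f$. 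This completes the proof.
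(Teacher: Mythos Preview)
Your proof is correct and follows the same approach as the paper, which simply states that the result ``is a direct consequence of the Riesz Representation Theorem'' without further detail. Your closed-graph argument to obtain boundedness of $f\mapsto\mfX(f)$ from the continuity-in-probability hypothesis is exactly the missing step the paper leaves implicit, and the remainder (Riesz, self-adjointness from symmetry, uniqueness) is the standard route.
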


\begin{definition} (a) The operator $\cR$ from Theorem \ref{th:gf}
is called the \und{covariance operator} of $\mfX$.
(b) The field $\mfX$ is called {\tt non-degenerate} if
$\cR$ is one-to-one.
(c)  A \und{white noise} over $\bH$ is a zero-mean generalized Gaussian field with
the covariance operator equal to the identity operator.
\end{definition}

Standard arguments from functional analysis lead to the following
result.
\begin{theorem}\label{th:wn}
(a) For every zero-mean generalized Gaussian field  $\mfX$ over a Hilbert space $\bH$,
there exist a bounded linear  operator $\cK$ on $\bH$
 and a
white noise $\mfB$ over $\bH$ so that $\cK\cK^*$ is the covariance operator of $\mfX$
and, for every $f\in \bH$,
\bel{wn11}
\mfX(f)=\mfB(\cK^* f);
\ee
as usual, $\cK^*$ denotes the adjoint of $\cK$.

(b) For every zero-mean non-degenerate
generalized Gaussian field $\mfX$ over a Hilbert space $\bH$,
there exists a Hilbert space $\bH_{\cR}$ such that
$\bH$ is continuously embedded into $\bH_{\cR}$ and
$\mfX$ extends to a white noise over $\bH_{\cR}$.
\end{theorem}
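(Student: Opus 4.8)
The plan is to derive both parts from the covariance operator $\cR$ of $\mfX$ supplied by Theorem \ref{th:gf}. First observe that $\cR$ is not merely self-adjoint but also positive: $(\cR f,f)_{\bH}=\bE\big(\mfX(f)^2\big)\ge 0$ for every $f\in\bH$. Hence the continuous functional calculus (or the spectral theorem) produces a unique bounded, self-adjoint, positive square root $\cR^{1/2}$, and I would simply take $\cK=\cK^*=\cR^{1/2}$, so that $\cK\cK^*=\cR$ holds automatically. The remaining content of (a) is to manufacture a white noise $\mfB$ with $\mfX(f)=\mfB(\cR^{1/2}f)$.

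For this, set $\bH_0=\overline{\cR^{1/2}\bH}$ and note that $\ker\cR^{1/2}=\ker\cR=\bH_0^{\perp}$, the nontrivial direction following from $(\cR f,f)_\bH=\|\cR^{1/2}f\|_\bH^2$. On the dense subspace $\cR^{1/2}\bH\subset\bH_0$ I would define $\mfB_0$ by $\mfB_0(\cR^{1/2}f):=\mfX(f)$, choosing the representative $f\in\bH_0$, which is unique because $\cR^{1/2}$ is injective on $\bH_0$. The key computation is that this is isometric into $L_2(\Omega)$: for $g=\cR^{1/2}f,\ h=\cR^{1/2}f'$ with $f,f'\in\bH_0$,
$$
\bE\big(\mfB_0(g)\mfB_0(h)\big)=\bE\big(\mfX(f)\mfX(f')\big)=(\cR f,f')_\bH=(\cR^{1/2}f,\cR^{1/2}f')_\bH=(g,h)_\bH .
$$
Since $\mfB_0$ is a linear isometry on a dense subspace, it extends continuously to a white noise on $\bH_0$, and an $L_2(\Omega)$-limit of zero-mean Gaussians is again a zero-mean Gaussian. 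To reach all of $\bH$ I would, using the richness of the probability space, adjoin an independent white noise $\mfB_1$ over $\bH_0^{\perp}$ and set $\mfB(f)=\mfB_0(f_0)+\mfB_1(f_1)$ under the decomposition $f=f_0+f_1$. Finally $\mfX(f)=\mfB(\cR^{1/2}f)$ holds because $\cR^{1/2}f=\cR^{1/2}f_0\in\bH_0$ and $\mfX(f_1)=0$ almost surely, its variance $(\cR f_1,f_1)_\bH$ vanishing, so that $\mfX(f)=\mfX(f_0)=\mfB_0(\cR^{1/2}f_0)=\mfB(\cR^{1/2}f)$.

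For (b) the non-degeneracy of $\cR$ lets me promote the covariance itself to a genuine inner product. I would define $(f,g)_{\bH_\cR}:=(\cR f,g)_\bH$ on $\bH$; bilinearity and symmetry are clear, and positive-definiteness holds because $(\cR f,f)_\bH=0$ forces $\cR^{1/2}f=0$, hence $\cR f=0$, hence $f=0$. Let $\bH_\cR$ be the completion of $\bH$ under this inner product. The identity map is injective and satisfies $\|f\|_{\bH_\cR}^2=(\cR f,f)_\bH\le\|\cR\|\,\|f\|_\bH^2$, giving the continuous embedding $\bH\hookrightarrow\bH_\cR$. Since $\bE\big(\mfX(f)^2\big)=\|f\|_{\bH_\cR}^2$, the map $\mfX$ is an isometry of the dense subspace $\bH$ into $L_2(\Omega)$ and therefore extends to an isometry $\widetilde{\mfX}$ on $\bH_\cR$; by polarization its covariance is exactly $(\cdot,\cdot)_{\bH_\cR}$, so $\widetilde{\mfX}$ is a white noise over $\bH_\cR$ that extends $\mfX$.

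The main obstacle I anticipate is the degenerate case in part (a): when $\cR$ fails to be injective one cannot simply set $\mfB(g)=\mfX(\cR^{-1/2}g)$, so the care lies in defining $\mfB_0$ only on $\overline{\cR^{1/2}\bH}$, checking that it is well defined and isometric there, and then gluing on an independent white noise over the kernel consistently with $\mfX(f)=\mfB(\cR^{1/2}f)$. Verifying that the extended fields in both parts remain Gaussian and continuous, so that all of Definition \ref{Ch1:def1} is satisfied and not merely the second-moment identity, is the other point requiring attention, though it follows from the standard fact that an $L_2(\Omega)$-limit of zero-mean Gaussians is a zero-mean Gaussian.
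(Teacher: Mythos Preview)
Your argument is correct. The paper does not actually supply a proof of this theorem, stating only that it follows from ``standard arguments from functional analysis''; your proposal is precisely such an argument, taking $\cK=\cR^{1/2}$ and building $\mfB$ and $\bH_{\cR}$ from the covariance operator in the natural way. The paper's subsequent Remark~\ref{rem-wn} sketches essentially the same completion for part~(b), and its treatment of the degenerate case via the factor space $\bH/\ker\cR$ matches the care you take in part~(a) with the decomposition $\bH=\bH_0\oplus\ker\cR^{1/2}$.
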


\begin{remark}\label{rem-wn}
 (a) If $\mfX$ is non-degenerate and $\cR: \bH\to\bH$
 is onto, then $\cR$ has a bounded inverse and
  $\bH_{\cR}=\bH$.
  (b) If $\ker{\cR}$ is non-trivial, then
we can define $\bH_{\cR}$ as the closure of the factor space $\bH/\ker(\cR)$
 with respect to the inner product $(\ol{f},\ol{g})_{\bH_{\cR}}=(\cR f,g)_{\bH}$,
 where $\ol{f}$ is the equivalence class of $f$ in $\bH/\ker(\cR)$.
 Direct computations show that
 the  generalized random field $\mfB$ over $\bH/\ker(\cR)$, defined by
 $$
 \mfB(\ol{f})=\mfX(f), \ f\in \bH,
 $$
 extends to a white noise over $\bH_{\cR}$.
 \end{remark}

We will now discuss several connections between generalized
Gaussian fields  and Gaussian processes.

 Denote by $\chi_t=\chi_t(s)$  the
indicator function of the interval $[0,t]$:
\bel{char-f}
\chi_t(s)=
\begin{cases}
1, & 0\leq s \leq t;\\
0, & {\rm otherwise}.
\end{cases}
\ee
With this definition, $\chi_{t_2}(s)-\chi_{t_1}(s)$ is the
indicator  function of the interval $(t_1,t_2]$, $t_2>t_1$.

\begin{theorem} \label{th:wn11}
 (a) If $\mfB$ is a white noise over $L_2(\bR)$,
 then $B(t)=\mfB(\chi_t)$ is a standard Brownian motion on
$(\Omega, \, \cF,\, \bP)$ and, for every $f\in L_2(\bR)$, we have
\bel{wn-rep1}
\mfB(f)=\int_{\bR}f(s)dB(s).
\ee
(b) For every zero-mean non-degenerate generalized Gaussian field
$\mfX$ over $L_2((0,T))$, there exist a bounded linear operator $\cK^*$
on $L_2(\bR)$ and a
standard Brownian motion $W=W(t)$ such that, for every $f\in L_2(\bR)$,
\bel{wn-rep2}
\mfX(f)=\int_{\bR}(\cK^*f)(s)dW(s).
\ee
\end{theorem}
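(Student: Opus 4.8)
The plan is to obtain (a) directly from the defining properties of a white noise together with the textbook construction of the Wiener integral, and then to reduce (b) to (a) after enlarging the parameter space from $L_2((0,T))$ to $L_2(\bR)$.

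For part (a) I would start from the fact that, since the covariance operator of $\mfB$ is the identity, $\bE\big(\mfB(f)\mfB(g)\big)=(f,g)_{L_2(\bR)}$ and in particular $\bE\big(\mfB(f)^2\big)=\|f\|^2_{L_2(\bR)}$. Put $B(t)=\mfB(\chi_t)$ for $t\ge 0$ and $B(t)=-\mfB(\mathbf{1}_{(t,0]})$ for $t<0$, so that $t\mapsto B(t)$ is a two-sided process; note $B(0)=\mfB(0)=0$, and $B$ is zero-mean and Gaussian because $\mfB$ is a zero-mean Gaussian field and is linear. A short computation gives $B(t)-B(s)=\mfB(\mathbf{1}_{(s,t]})$ for all $s<t$; consequently $\bE\big(B(s)B(t)\big)=(\chi_s,\chi_t)_{L_2(\bR)}=s\wedge t$ for $s,t\ge 0$, increments over disjoint intervals are images under $\mfB$ of orthogonal functions and hence uncorrelated — being jointly Gaussian (again by linearity of $\mfB$) they are independent — and $B(t)-B(s)\sim N(0,|t-s|)$. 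Since $\bE|B(t)-B(s)|^4=3|t-s|^2$, the Kolmogorov criterion provides a continuous modification, which we retain as $B$; this is a standard Brownian motion. For the representation formula I would check $\mfB(f)=\int_{\bR}f(s)\,dB(s)$ first for a simple function $f$, i.e.\ a finite linear combination of indicators of intervals, where it is immediate from the linearity of $\mfB$ and the definition of the Wiener integral, and then pass to a general $f\in L_2(\bR)$ by approximating it in $L_2(\bR)$ by simple functions: the left side converges in $L_2(\Omega)$ because of the identity $\bE\big(\mfB(g)^2\big)=\|g\|^2$, the right side converges by the \Ito\ isometry, and the two limits coincide since they agree along the approximating sequence.

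For part (b) I would apply Theorem \ref{th:wn}(a) to $\mfX$ over $L_2((0,T))$ to get a bounded operator $\cK$ on $L_2((0,T))$ and a white noise $\mfB_0$ over $L_2((0,T))$ with $\mfX(f)=\mfB_0(\cK^*f)$. Since the probability space is rich enough, I can adjoin a white noise over $L_2(\bR\setminus(0,T))$ independent of $\mfB_0$ and splice the two along the decomposition $L_2(\bR)=L_2((0,T))\oplus L_2(\bR\setminus(0,T))$; a one-line covariance check shows the result $\mfB$ is a white noise over $L_2(\bR)$ that restricts to $\mfB_0$ on $L_2((0,T))$. (Alternatively, one may extend $\mfX$ to a white noise over the abstract space $\bH_{\cR}$ of Theorem \ref{th:wn}(b) and identify that separable Hilbert space with $L_2(\bR)$.) Part (a) applied to $\mfB$ furnishes a standard Brownian motion $W(t)=\mfB(\chi_t)$ with $\mfB(g)=\int_{\bR}g(s)\,dW(s)$ for every $g\in L_2(\bR)$, so taking $g=\cK^*f$ yields $\mfX(f)=\int_{\bR}(\cK^*f)(s)\,dW(s)$. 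Finally, extending $\cK^*$ to a bounded operator on all of $L_2(\bR)$ — for instance by precomposing with the orthogonal projection of $L_2(\bR)$ onto $L_2((0,T))$ — casts the identity in the stated form and, if desired, at the same time extends $\mfX$ to a generalized Gaussian field over $L_2(\bR)$.

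The one genuinely analytic point is the passage to a continuous version of $B$ in part (a); everything else is bookkeeping. In part (b) the step requiring some care is tracking the domain of $\cK^*$: Theorem \ref{th:wn}(a) naturally yields an operator on $L_2((0,T))$, and one must decide how to view it as acting on $L_2(\bR)$ so that the formula $\mfX(f)=\int_{\bR}(\cK^*f)(s)\,dW(s)$ reads exactly as asserted for $f\in L_2(\bR)$.
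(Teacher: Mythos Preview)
Your proposal is correct and follows essentially the same approach as the paper: part (a) by direct computation of the covariance and an $L_2$ approximation argument for the integral representation, and part (b) by invoking Theorem~\ref{th:wn} together with part (a). The paper's own proof is extremely terse (``Direct computations'' for (a), ``follows from part (a) and from Theorem~\ref{th:wn}'' for (b)); you have filled in the omitted details, including the extension of $\mfB_0$ and $\cK^*$ from $L_2((0,T))$ to $L_2(\bR)$, which the paper does not spell out.
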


\pr (a) Direct computations.

 (b) This follows from part (a) and from Theorem \ref{th:wn}.
 \endproof

 \begin{remark} While beyond the scope of this paper,
 a similar result is true for multi-parameter
processes as well. For example, if $\mfX$ is a generalized Gaussian
field
over $L_2(\mathbb{R}^2)$, then the same arguments show that
\eqref{wn-rep2} holds with a Brownian sheet $W$.
\end{remark}

 {\tt  From now
on, we assume that the space $\bH$ is separable, the
field $\mfX$ is \\ non-degenerate, and
$\cF=\cF^{\mfX}$, the sigma-algebra generated by the random variables
$\mfX(f),\ f\in \bH$.}

\begin{definition} (a) The \und{chaos space} generated by $\mfX$ is the
collection of all square-integrable
random variables on $(\Omega,\, \cF,\, \bP)$.  This chaos space
will be denoted by $\HX$.

(b) The \und{first chaos space} generated by $\mfX$ is the sub-space of $\HX$,
consisting of the random variables $\mfX(f)$, $f\in \bH$.
 The first chaos space will be denoted by $\HXt$.
\end{definition}

It follows that $\HX$ is a Hilbert space with inner product $(\xi,\eta)_{\HX}=
\bE(\xi\eta)$, and $\HXt$ is a Hilbert sub-space of $\HX$.
Moreover, the space $\HXt$ is separable: if
$\{\bar{f}_1, \bar{f}_2, \ldots\}$ is
 a dense countable set in $\bH$, then the collection of all finite linear combinations
of $\mfX(\bar{f}_i)$ with rational coefficients is a dense countable set in
$\HXt$.

Our next objective is to show how an orthonormal basis in $\HXt$ leads to an
orthonormal basis in $\HX$. We will need some additional constructions.

For an integer $n\geq 0$,
  the $n$-th \und{Hermite polynomial} $H_n=H_n(t)$ is defined by
\begin{equation}
\label{eq:HerPol}
 H_{n}(t)=(-1)^ne^{t^{2}/2}
\frac{d^{n}}{dt^{n}}e^{-t^{2}/2}.
\end{equation}
Next, denote by $\cI$ the collection of
multi-indices, that is, sequences
 $\alpha=\{\alpha_k,\ k\geq 1\}= \{\alpha_1, \alpha_2,\ldots\}$
 with the following properties:
 \begin{itemize}
 \item each $\alpha_k$ is a non-negative integer: $\alpha_k\in \{0,1,2,\ldots\}$.
 \item only finitely many of $\alpha_k$ are non-zero:
 $|\alpha|:=\sum\limits_{k=1}^{\infty} \alpha_k < \infty. $
 \end{itemize}
 The set $\cI$ is countable, being a countable union of countable sets.
  By $\epsilon_n$ we denote the multi-index $\alpha=\{\alpha_k,\ k\geq 1\}$
 with $\alpha_k=1$ if $n=k$ and $\alpha_k=0$ otherwise.
 For $\alpha \in \cJ$, we will use the notation
 $\alpha!:=\alpha_1!\,\alpha_2!\cdots.$
 Let $\{\xi_1,\xi_2,\ldots\}$ be an ordered countable collection of random variables.
 For $\alpha\in \cI$ define random variables $\xi_{\alpha}$ as follows:
 \bel{xi-al}
 \xi_{\alpha}=\prod_{k\geq 1}\frac{H_{\alpha_k}(\xi_k)}{\sqrt{\alpha_k!}},
 \ee
 where $H_{\alpha_k}$ is $\alpha_k$-th Hermite polynomial (\ref{eq:HerPol}).

 \begin{theorem}
 \label{th:basis}
 Let $\{\xi_1,\xi_2,\ldots\}$ be an orthonormal basis in $\HXt$.
 Then the collection $\Xi=\{\xi_{\alpha}, \ \alpha \in \cI\}$
 is an orthonormal basis in $\HX$: for every $\eta\in \HX$ we have
 $$
 \eta=\sum_{\alpha \in\cI} \Big(\bE(\eta\xi_{\alpha})\Big)\, \xi_{\alpha},\ \
 \bE\eta^2=\sum_{\alpha\in \cI} \Big(\bE(\eta\xi_{\alpha})\Big)^2.
 $$
 \end{theorem}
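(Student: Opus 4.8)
The plan is to verify first that $\Xi$ is an orthonormal system in $\HX$, and then that its closed linear span is all of $\HX$; once this is done, the displayed expansion and the Parseval identity are nothing but the general theory of orthonormal bases in a Hilbert space, so I will concentrate on those two points.

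\emph{Orthonormality.} Since $\{\xi_k\}$ is an orthonormal system in the first chaos $\HXt$ and $\mfX$ is zero-mean, the $\xi_k$ are mean-zero, jointly Gaussian, and pairwise uncorrelated, hence independent $N(0,1)$ random variables (any finite subfamily is the $L_2$-limit of jointly Gaussian vectors of the form $(\mfX(f_1),\dots,\mfX(f_m))$, hence jointly Gaussian). For $\alpha\in\cI$ only finitely many $\alpha_k$ are nonzero and $H_0\equiv 1$, so $\xi_\alpha$ is a genuine finite product. Using independence together with the classical orthogonality relation $\bE\big(H_m(Z)H_n(Z)\big)=n!\,\delta_{mn}$ for $Z\sim N(0,1)$, one computes, for $\alpha,\beta\in\cI$,
$$
\bE(\xi_\alpha\xi_\beta)=\prod_{k\ge 1}\frac{\bE\big(H_{\alpha_k}(\xi_k)H_{\beta_k}(\xi_k)\big)}{\sqrt{\alpha_k!\,\beta_k!}}=\prod_{k\ge1}\delta_{\alpha_k\beta_k},
$$
where all but finitely many factors equal $1$; this product is $1$ if $\alpha=\beta$ and $0$ otherwise. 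Hence $\Xi$ is orthonormal.

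\emph{Reduction of completeness to finite dimensions.} Put $\cF_n=\sigma(\xi_1,\dots,\xi_n)$. I would first check that $\cF=\cF^{\mfX}=\sigma(\xi_k,\ k\ge1)$: each $\xi_k\in\HXt$ is $\cF^{\mfX}$-measurable, while every $\mfX(f)\in\HXt$ equals the $L_2$-limit of its Fourier series $\sum_k(\mfX(f),\xi_k)_{\HX}\,\xi_k$ and is therefore $\sigma(\xi_k,\ k\ge1)$-measurable. By L\'evy's martingale convergence theorem, $\eta=\lim_n\bE(\eta\mid\cF_n)$ in $\HX$ for every $\eta\in\HX$, so $\bigcup_n L_2(\Omega,\cF_n,\bP)$ is dense in $\HX$. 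Thus it suffices to show that, for each fixed $n$, the random variables $\xi_\alpha$ with $\alpha$ supported in $\{1,\dots,n\}$ span a dense subspace of $L_2(\Omega,\cF_n,\bP)$. Since $(\xi_1,\dots,\xi_n)$ is a standard Gaussian vector, $L_2(\Omega,\cF_n,\bP)$ is isometric to $L_2(\bR^n,\gamma_n)$ with $\gamma_n$ the standard Gaussian measure; polynomials are dense in $L_2(\bR^n,\gamma_n)$ (the Gaussian measure has exponentially light tails, hence is determined by its moments — alternatively one invokes the Hermite generating identity $e^{tx-t^2/2}=\sum_{m\ge0}H_m(x)t^m/m!$ and an analytic-continuation argument to deduce density from the totality of the exponentials $e^{\langle t,x\rangle}$), and by the one-dimensional Hermite orthogonality and a tensor-product argument the normalized products $\prod_{k=1}^n H_{\alpha_k}(x_k)/\sqrt{\alpha_k!}$ form an orthonormal basis of $L_2(\bR^n,\gamma_n)$. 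Pulling back through the isometry, these are precisely the $\xi_\alpha$ supported in $\{1,\dots,n\}$.

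Putting the two halves together, the closed linear span of $\Xi$ contains $\bigcup_n L_2(\Omega,\cF_n,\bP)$ and hence equals $\HX$, so $\Xi$ is a complete orthonormal system, and the expansion $\eta=\sum_{\alpha\in\cI}\bE(\eta\xi_\alpha)\,\xi_\alpha$ together with $\bE\eta^2=\sum_{\alpha\in\cI}\big(\bE(\eta\xi_\alpha)\big)^2$ follows. I expect the completeness half to be the main obstacle — concretely, the two ``soft analysis'' inputs needed to pass from finite- to infinite-dimensional information: the identification $\cF^{\mfX}=\sigma(\xi_k,\ k\ge1)$ with the ensuing martingale-convergence argument, and the (standard but not one-line) density of polynomials in the Gaussian $L_2$-space; the orthonormality and the final bookkeeping are routine.
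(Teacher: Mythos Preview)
Your argument is correct and complete: the orthonormality computation is standard, and the completeness half---identifying $\cF^{\mfX}$ with $\sigma(\xi_k,\,k\ge1)$, reducing to finite $n$ via martingale convergence, and invoking the Hermite basis of $L_2(\bR^n,\gamma_n)$---is the usual self-contained route to this result. The paper itself does not give a proof but simply cites \cite[Theorem~2.1]{Maj}, so there is nothing to compare against; your write-up in fact supplies what the paper outsources.
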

\pr
See \cite[Theorem 2.1]{Maj}.
\endproof

\begin{corollary}
\label{prop:wne}
Let $\mfB$ be a white noise over a separable Hilbert space $\bH$ and let
$\{m_1,\, m_2,\, \ldots\}$ be an orthonormal basis in $\bH$. Then
$\{\xi_k=\mfB(m_k),\ k\ge 1\}$ is an orthonormal basis in $\HBt$ and, for
every $f\in \bH$,
\bel{wne}
\mfB(f)=\sum_{k=1}^{\infty} (f,m_k)_{\bH}\, \mfB(m_k).
\ee
\end{corollary}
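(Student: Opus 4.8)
The plan is to exploit the fact that a white noise acts isometrically from $\bH$ into $L_2(\Omega)$, which converts the $\bH$-convergence of the Fourier series of $f$ into $L_2(\Omega)$-convergence.

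First, since $\mfB$ is a white noise, its covariance operator is the identity, so Theorem \ref{th:gf} gives
$$
\bE\big(\mfB(f)\mfB(g)\big)=(f,g)_{\bH},\qquad f,g\in\bH.
$$
Taking $f=m_i,\ g=m_j$ yields $\bE(\xi_i\xi_j)=(m_i,m_j)_{\bH}=\delta_{ij}$, so $\{\xi_k,\ k\ge1\}$ is an orthonormal system in $\HBt$ (whose inner product is $\bE(\xi\eta)$). Applying the same identity to $f-g$ shows $\bE|\mfB(f)-\mfB(g)|^2=\|f-g\|_{\bH}^2$, i.e. $\mfB$ is a linear isometry from $\bH$ into $\HB=L_2(\Omega,\cF,\bP)$.

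Next, fix $f\in\bH$ and put $f_N=\sum_{k=1}^N (f,m_k)_{\bH}\,m_k$. Since $\{m_k\}$ is an orthonormal basis of $\bH$, $\|f-f_N\|_{\bH}\to 0$. By linearity of $\mfB$ (Definition \ref{Ch1:def1}(a)(2)), $\mfB(f_N)=\sum_{k=1}^N (f,m_k)_{\bH}\,\mfB(m_k)$, and by the isometry just established $\bE|\mfB(f)-\mfB(f_N)|^2=\|f-f_N\|_{\bH}^2\to 0$. Hence $\mfB(f)=\lim_{N\to\infty}\sum_{k=1}^N (f,m_k)_{\bH}\,\mfB(m_k)$ with convergence in $\HB$, which is precisely \eqref{wne}.

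Finally, \eqref{wne} shows that every element $\mfB(f)$ of $\HBt$ lies in the closed linear span of $\{\xi_k=\mfB(m_k)\}$; since $\HBt$ is a Hilbert subspace of $\HB$ containing each $\xi_k$, that closed span coincides with $\HBt$. Together with the orthonormality established above, this shows $\{\xi_k,\ k\ge1\}$ is an orthonormal basis of $\HBt$. No genuine obstacle arises; the only point that deserves care is that the convergence in \eqref{wne} must be in $L_2(\Omega)$ rather than merely in probability, and this is supplied by the white-noise isometry, not by property (3) of Definition \ref{Ch1:def1} by itself.
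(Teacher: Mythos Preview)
Your proof is correct and follows essentially the same approach as the paper: both use the white-noise identity $\bE\big(\mfB(f)\mfB(g)\big)=(f,g)_{\bH}$ to transfer orthonormality and the Fourier expansion of $f$ from $\bH$ to $\HBt$. Your version is simply more explicit about the fact that $\mfB$ is an isometry (so that $\bH$-convergence of the partial sums becomes $L_2(\Omega)$-convergence), whereas the paper records this step in one line.
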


\pr Note that $\bE \big(\xi_k \xi_n\big)=\bE \big(\mfB(m_k)\mfB(m_n)\big)
=(m_k,m_n)_{\bH}$, so
the system $\{\xi_k,\ k\ge 1\}$ is orthonormal in $\HBt$
 if and only if $\{m_k,\, k\geq 1\}$ is orthonormal in $\bH$.
If $\xi\in \HBt$, then $\xi=\mfB(f)$ for some $f\in \bH$.
By assumption, $f=\sum_{k=1}^{\infty}(f,m_k)_{\bH}\, m_k$, which implies
(\ref{wne}) and completes the proof. \endproof

If $\bH=L_2((0,T))$ and $f=\chi_t$, then (\ref{wne}) becomes
a familiar representation of the standard Brownian motion on $[0,T]$:
\bel{wn:wp}
W(t)=\sum_{k=1}^{\infty} \left(\int_0^t m_k(s)ds\right) \,
\left(\int_0^T m_k(s)dW(s)
\right).
\ee
Now, let $\mfX$ be a zero-mean generalized Gaussian field over
a separable Hilbert space $\bH$.
By (\ref{wne}) and Theorem \ref{th:wn}(a),
we can take a white noise representation of $\mfX$,
$\mfX(f)=\mfB(\cK^* f)$, and get an expansion of $\mfX(f)$ using an
orthonormal basis in $\bH$:
\bel{wne1}
\mfX(f)=\sum_{k=1}^{\infty} (\cK^* f , m_k)_{\bH}\, \mfB(m_k).
\ee
Alternatively, by Theorem \ref{th:wn}(b)
$\mfX$ is a white noise over the space $\bH_{\cR}$ corresponding to the
covariance operator $\cR$ of $\mfX$.
 If $\{\ol{m}_k, \, k\geq 1\}$
is an orthonormal basis in $\bH_{\cR}$, then
we have an equivalent expansion of $\mfX(f)$:
\bel{nwne}
\mfX(f)=\sum_{k=1}^{\infty} (\cR f,\ol{m}_k)_{\bH}\, \mfX(\ol{m}_k).
\ee

We conclude the section with a brief discussion of the Wick
product, as we will
need this product to define $\mfX(f)$ for random $f$.
For more details, see \cite{HOUZ}.

The Wick product of two
arbitrary elements of $\HX$ can be computed by the formula
\bel{WP-def}
\left(\sum_{\alpha\in \cI} c_{\alpha}\xi_{\alpha}\right) \diamond
\left(\sum_{\beta\in \cI} d_{\beta}\xi_{\beta}\right)=
\sum_{\alpha,\beta\in \cI}c_{\alpha}d_{\beta}
\sqrt{\frac{(\alpha+\beta)!}{\alpha!\beta!}}\, \xi_{\alpha+\beta},
\ee
where $\alpha+\beta=\{\alpha_k+\beta_k,\, k\geq 1\}$ and
$\alpha!=\prod_{k\geq 1}\alpha_k!=\alpha_1!\alpha_2!\alpha_3!\cdots$.
In general, there is no guarantee that, for  $\xi,\eta\in \HX$, the
Wick product $\xi\diamond \eta $ belongs to $\HX$.

Similar to ordinary powers, we define Wick powers of a random variable
$\eta\in \HX$: $\eta^{\diamond n}=\eta\diamond\cdots\diamond \eta$.
Replacing ordinary powers with Wick powers in a Taylor series
for a function
$f$  leads
to the notion of a Wick function $f^{\diamond}$. For example, the Wick
exponential $e^{\diamond \eta}$ is defined by
\bel{Wick-exp}
e^{\diamond \eta}=\sum_{n=1}^{\infty} \frac{\eta^{\diamond n}}{n!}
\ee
and satisfies $e^{\diamond (\xi+\eta)}=e^{\diamond \xi}\diamond e^{\diamond \eta}.$
If $\eta\in\HXt$, then direct computations show that
\bel{WickExp}
e^{\diamond \eta}=e^{\eta-\frac{1}{2}\bE\eta^2}.
\ee

\section{Connection Between Processes and Fields}
\label{sec:connect}
Given a zero-mean
 generalized Gaussian field $\mfX$ over $L_2((0,T))$,
 we define its
 \und{associated process} $X(t),\ t>0,$ by
 \bel{a-proc}
 X(t)=\mfX(\chi_t).
 \ee
 Clearly, $X(t)$ is a Gaussian process. Let $\cK^*$ be the operator
 from Theorem \ref{th:wn11} and define the kernel function
 $K_{\mfX}=K_{\mfX}(t,s)$ by
 \bel{functionK}
K_{\mfX}(t,s)=(\cK^*\chi_t)(s).
\ee
 It then follows from (\ref{wn-rep2}) that
 \bel{a-proc1}
 X(t)=\int_0^T K_{\mfX}(t,s)dW(s)
 \ee
 for some standard Brownian motion $W$. Let us emphasize that, while
 every kernel $K(t,s)$ with minimal integrability properties can
 define a Gaussian process according to (\ref{a-proc1}), only a process
 associated with a generalized field over $L_2((0,T))$ has
 a kernel defined according to (\ref{functionK}), where $\cK^*$ is a
 bounded operator on $L_2((0,T))$. Recall that the definition
 of a generalized field (Definition \ref{Ch1:def1})
 includes a certain continuity property, and this property translates into addition
 structure of the kernel function in the representation of the associated
process.

Now assume that we are given a Gaussian process $X(t)$ defined by
(\ref{a-proc1}) with some kernel $K_{\mfX}(t,s)$.
 {\em We are not assuming
that $K_{\mfX}$ has the form \eqref{functionK}}.  In what follows, we
discuss sufficient conditions on $K_{\mfX}(t,s)$ ensuring that
$X(t)$ is the associated process of a generalized Gaussian
 field $\mfX$ over $L_2((0,T))$, that is,
representation (\ref{functionK}) does indeed hold with some
 bounded linear operator $\cK^*$ on $L_2((0,T))$.
For that, we need to  recover the operator $\cK^*$
from the kernel $K_{\mfX}(t,s)$.  By linearity, if  (\ref{functionK}) holds and
 if $s_0<s_1<\ldots<s_N$ are points in $[0,T]$ and
\bel{stepF}
f(s)=\sum\limits_{k=0}^{N-1} a_k(\chi_{s_{k+1}}(s)-\chi_{s_{k}}(s))
\ee is a step function, then
\bel{op1}
\cK^* f(s)= \sum_{k=0}^{N-1} a_k \big(K_{\mfX}(s_{k+1},s)-K_{\mfX}(s_k,s)\big).
\ee
To extend (\ref{op1}) to continuous functions $f$, the kernel
$K_{\mfX}(t,s)$ must have bounded variation as a function of $t$;
if this is indeed the case, then (\ref{op1}) implies that, for every
smooth compactly supported function $f$ on $[0,T]$,
\bel{op2}
\cK^* f(s)=\int_0^T f(t)K_{\mfX}(dt,s).
\ee
 It now follows that if
 the partial derivative $\partial K_{\mfX}(t,s)/\partial t$ exists and
 is square integrable over $[0,T]\times [0,T]$, then $\cK^*$,
 as defined by (\ref{op2}), extends to a bounded linear operator on
 $L_2((0,T))$.

 Let us now assume that  the process $X(t)$ define by (\ref{a-proc1})
 is \und{non-anticipating},  i.e.
 adapted to the filtration $\{\cF_t^W,\ 0\leq t\leq T\}$
  generated by the Brownian motion $W(s).$
  Then $K_{\mfX}(t,s)=0$ for $s>t$ and
 (\ref{a-proc1}) becomes
 \bel{a-proc2}
 X(t)=\int_0^t K_{\mfX}(t,s)dW(s).
 \ee
 Note that in this case we have
 \bel{cov-funct}
 \bE\big(X(t)X(s)\big)=\int_0^{\min(t,s)}K_{\mfX}(t,\tau)K_{\mfX}(s,\tau) d\tau.
 \ee
 For such processes,
 formula (\ref{op1}) and the conditions for the continuity of the
 corresponding operator $\cK^*$ must be modified as follows.

 \begin{theorem}\label{th:adapt}
 Assume that
 the process $X(t)$ defined by (\ref{a-proc1}) is non-anticipating.

 (a) If $f$ is a step function (\ref{stepF}), then
 \bel{KK}
\begin{split}
\cK^*f(s)&=\sum_{i=0}^{N-1}\big(\chi_{s_{i+1}}(s)-\chi_{s_i}(s)\big)
\Big(a_iK_{\mfX}(s_{i+1},s)\\
&+\sum_{k=i+1}^{N-1}a_{k}\big(K_{\mfX}(s_{k+1},s)-
K_{\mfX}(s_{k},s)\big)
\Big).
\end{split}
\ee
(b) If the function $K_{\mfX}(\cdot, s)$ has bounded
variation for every $s$ and  $\lim\limits_{\delta\to 0,\,
\delta>0}K_{\mfX}(s+\delta,s)=K_{\mfX}(s^+,s)$
exists for all $s\in (0,T),$
 then
  \bel{ker-op000}
   \cK^*f(s)=K_{\mfX}(s^+,s)f(s)+\int_s^T f(t)K_{\mfX}(dt,s)
 \ee
 for every continuous on $[0,T]$ function $f$.

 (c) If the function
$K_{\mfX}(t,s)$ has the following properties
\begin{enumerate}
\item $K_{\mfX}$ is continuous and non-negative for
 $0\leq s\leq t \leq T$,
 and $\sup\limits_{0<t<T} K_{\mfX}(t,t) \leq K_0$;
\item For every fixed $s_0$, the function $K_{\mfX}(t,s_0)$
is monotone as a function of $t$ and the partial
derivative $K^{(1)}(t,s)=\partial K_{\mfX}(t,s)/\partial  t$
exists for all $0<s<t<T$;
\item There exists a number $K_1=K_1(T)$ such that \bel{ker-cond1}
\sup_{0<t<T} \int_0^t K_{\mfX}(T,s)  |K^{(1)}(t,s)| ds
\leq K_1^2,
\ee
\end{enumerate}
then the corresponding operator $\cK^*$ defined by equation
\eqref{ker-op000} is bounded on $L_2((0,T))$ and the operator norm
$\|\cK^*\|$ of $\cK^*$ satisfies
\bel{op-norm}
\|\cK^*\|^2 \leq (K_0+K_1)^2.
\ee
\end{theorem}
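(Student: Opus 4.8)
The plan is to prove parts (a) and (b) by essentially bookkeeping with the step-function formula, and then to spend the real effort on part (c), which is the genuine estimate. For part (a), I would take a step function $f$ as in \eqref{stepF} and simply apply formula \eqref{op1} together with the non-anticipating condition $K_{\mfX}(t,s)=0$ for $s>t$. On the interval where $\chi_{s_{i+1}}(s)-\chi_{s_i}(s)=1$, i.e. $s\in(s_i,s_{i+1}]$, the terms $K_{\mfX}(s_{k+1},s)-K_{\mfX}(s_k,s)$ in \eqref{op1} vanish for $k<i$ (both arguments less than $s$), and for $k=i$ the term $K_{\mfX}(s_{i+1},s)-K_{\mfX}(s_i,s)$ collapses to $K_{\mfX}(s_{i+1},s)$ since $s_i<s$. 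Collecting the surviving terms gives exactly \eqref{KK}; this is a routine rearrangement of a telescoping sum, so I would state it and check one index.

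For part (b), I would pass from the step-function formula \eqref{KK} to general continuous $f$ by approximation. Fix $s$ and take a sequence of step functions $f_n\to f$ uniformly whose breakpoints refine. In \eqref{KK}, the inner double sum $\sum_{k>i}a_k(K_{\mfX}(s_{k+1},s)-K_{\mfX}(s_k,s))$ is a Riemann--Stieltjes sum that converges to $\int_s^T f(t)\,K_{\mfX}(dt,s)$, using that $K_{\mfX}(\cdot,s)$ has bounded variation, while the leading term $a_iK_{\mfX}(s_{i+1},s)$ — where $s_i<s\le s_{i+1}$ — converges to $f(s)K_{\mfX}(s^+,s)$ by the assumed one-sided limit. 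This yields \eqref{ker-op000}. The only care needed is the interchange of the limit with the integral and the handling of the boundary term, both standard given bounded variation.

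For part (c), the goal is the operator-norm bound \eqref{op-norm}. I would write $\cK^*f(s)=K_{\mfX}(s^+,s)f(s)+\int_s^T f(t)\,K^{(1)}(t,s)\,dt$, valid now because monotonicity and the derivative hypothesis let me replace $K_{\mfX}(dt,s)$ by $K^{(1)}(t,s)\,dt$. Split $\cK^*=A+B$ with $(Af)(s)=K_{\mfX}(s^+,s)f(s)$ and $(Bf)(s)=\int_s^T f(t)K^{(1)}(t,s)\,dt$. The multiplication operator $A$ has norm at most $\sup_s K_{\mfX}(s,s)\le K_0$ by hypothesis (1), using continuity to identify $K_{\mfX}(s^+,s)=K_{\mfX}(s,s)$. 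For $B$, I would estimate $\|Bf\|_{L_2}^2=\int_0^T\big(\int_s^T f(t)K^{(1)}(t,s)\,dt\big)^2 ds$ by a Schur-type / Cauchy--Schwarz argument: write $|K^{(1)}(t,s)|=|K^{(1)}(t,s)|^{1/2}\cdot|K^{(1)}(t,s)|^{1/2}$ and pull out a weight. The natural choice, matching \eqref{ker-cond1}, is to bound $\int_s^T|f(t)K^{(1)}(t,s)|\,dt\le\big(\int_s^T |f(t)|^2 |K^{(1)}(t,s)|\,dt\big)^{1/2}\big(\int_s^T |K^{(1)}(t,s)|\,dt\big)^{1/2}$, then integrate in $s$, apply Fubini, and recognize that $\int_0^t |K^{(1)}(t,s)|\,ds = |K_{\mfX}(t,t^+)-K_{\mfX}(t,0^+)|\le K_{\mfX}(t,t)\le K_{\mfX}(T,\cdot)$-type bounds; the hypothesis \eqref{ker-cond1} is precisely the weighted quantity that survives, giving $\|B\|\le K_1$. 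Then $\|\cK^*\|\le\|A\|+\|B\|\le K_0+K_1$, hence \eqref{op-norm}.

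The main obstacle is getting the Schur-test bookkeeping in part (c) to line up exactly with the stated condition \eqref{ker-cond1}: one must choose the right asymmetric splitting of the kernel $K^{(1)}(t,s)$ and verify, via monotonicity of $t\mapsto K_{\mfX}(t,s_0)$, that the "column" integral $\int_0^t K_{\mfX}(T,s)|K^{(1)}(t,s)|\,ds$ is indeed the quantity that controls $\|B\|^2$, with the $K_{\mfX}(T,s)$ weight appearing because one estimates $|f(t)|^2$ against a comparison with the terminal kernel. The rest is routine Fubini and triangle-inequality manipulation.
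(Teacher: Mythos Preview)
Your plan for (a) and (b) is exactly the paper's: substitute the non-anticipating condition $K_{\mfX}(t,s)=0$ for $s>t$ into \eqref{op1} and collect surviving terms on each interval $(s_i,s_{i+1}]$ for (a), then pass to the limit in the partition for (b). Your $A+B$ decomposition in (c), with $\|A\|\le K_0$ and the Cauchy--Schwarz splitting $|K^{(1)}|=|K^{(1)}|^{1/2}\cdot|K^{(1)}|^{1/2}$ for $B$, is also the paper's strategy (the paper phrases the $A+B$ split as a WLOG reduction to $K_{\mfX}(s^+,s)=0$ and $K^{(1)}\ge 0$, which is the same thing).

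There is, however, a concrete slip in your execution of the $B$ estimate. You write $\int_0^t|K^{(1)}(t,s)|\,ds=|K_{\mfX}(t,t^+)-K_{\mfX}(t,0^+)|$, but $K^{(1)}=\partial K_{\mfX}/\partial t$, not $\partial/\partial s$, so the fundamental theorem of calculus does not apply to that integral; you have swapped the roles of the two variables. The correct mechanism for producing the weight $K_{\mfX}(T,s)$ in \eqref{ker-cond1} is the \emph{other} Cauchy--Schwarz factor: after
\[
|Bf(s)|^2\le\Big(\int_s^T|K^{(1)}(\tau,s)|\,d\tau\Big)\Big(\int_s^T|K^{(1)}(\tau,s)|\,|f(\tau)|^2\,d\tau\Big),
\]
the first factor is an integral in $\tau$, where the FTC and monotonicity in $t$ do give $|K_{\mfX}(T,s)-K_{\mfX}(s^+,s)|\le K_{\mfX}(T,s)$. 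Multiplying this bound into the second factor and then applying Fubini yields
\[
\|Bf\|_{L_2}^2\le\int_0^T|f(\tau)|^2\Big(\int_0^\tau K_{\mfX}(T,s)\,|K^{(1)}(\tau,s)|\,ds\Big)d\tau\le K_1^2\|f\|_{L_2}^2,
\]
which is precisely \eqref{ker-cond1}. So your target integral $\int_0^t K_{\mfX}(T,s)|K^{(1)}(t,s)|\,ds$ is the right one, but your account of how the $K_{\mfX}(T,s)$ weight arises has the variables interchanged; once you correct that single step, the argument is the paper's.
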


\pr (a) By assumption, $K_{\mfX}(t,s)=0$ for $s>t$.
 Fix an $s$ such that $s\in (s_j,s_{j+1}]$ for some $j=0,\ldots, N-1$.
  By (\ref{op1}) we have for this value of $s$
\begin{equation*}
\begin{split}
\cK^*f(s)&=\sum_{k=0}^{N-1} a_k \big(K_{\mfX}(s_{k+1},s)-
K_{\mfX}(s_{k},s) \big)\\
&=a_{j}K_{\mfX}(s_{j+1},s)
+\sum_{k=j+1}^{N-1}a_{k}\big(K_{\mfX}(s_{k+1},s)-
K_{\mfX}(s_{k},s)\big).
\end{split}
\end{equation*}
Since $\chi_{s_{k+1}}(s)-\chi_{s_{k}}(s)$ is the indicator function
of the interval $(s_k,s_{k+1}]$, (\ref{KK}) follows.

(b) Under the additional assumptions on the kernel $K_{\mfX}$,
 (\ref{ker-op000}) follows from (\ref{KK}) after passing to the limit
 $\max\limits_{j=0,\ldots, N-1}|s_{j+1}-s_j|\to 0$.

 (c) With no loss of generality, we can assume that
 $K_{\mfX}(s^{+},s)=0$ and
 $K^{(1)}\geq 0$; otherwise, we replace $K_{\mfX}(t,s)$ with
 either $K_{\mfX}(s^{+},s)-K_{\mfX}(t,s)$
 or $K_{\mfX}(t,s)-K_{\mfX}(s^{+},s)$.
  Let $g$ be a smooth compactly supported function on $(0,T)$.
 It follows from (\ref{ker-op000}) that
 \begin{equation*}
 \cK^*g(s)=
\int_s^T K^{(1)}(\tau,s)\, g(\tau)\, d\tau.
 \end{equation*}
  Then
   we use the Cauchy-Schwartz
  inequality and the properties of $K^{(1)}$:
  \begin{equation*}
\begin{split}
&\int_0^T \left| \int_s^T  K^{(1)}(\tau,s)
                g(\tau) d\tau \right|^2 ~ds
                = \int_0^T
                \left| \int_s^T \left[K^{(1)}(\tau,s)\right]^{1/2}
                \left[K^{(1)}(\tau,s)\right]^{1/2} g(\tau) d\tau
                 \right|^2 ~ds \\
            &\leq \int_0^T \int_s^T K^{(1)}(\tau,s) d\tau
                \int_s^T K^{(1)}(\tau,s) g^2(\tau) d\tau ~ds\\
           & \leq \int_0^T
           \left( K_{\mfX}(T,s) - K_{\mfX}(s,s)\right) \int_s^T
                K^{(1)}(\tau,s) g^2(\tau) d\tau ~ds \notag\\
            &\leq \int_0^T \left( \int_0^\tau K_{\mfX}(T,s)
                K^{(1)}(\tau,s) ~ds \right) g^2(\tau) ~d\tau \notag
    \leq K_1^2(T) \|g\|_{L_2((0,T))}^2.
    \end{split}
    \end{equation*}
    \endproof
Here are several examples of processes covered by
  part (c) of  Theorem \ref{th:adapt}.
\begin{example} {\rm
Assume that $K_{\mfX}(s^{+},s)=0$ and $K^{(1)}(t,s)\geq 0$.
Let $R(t,s)=\mathbb{E}\big(X(t)X(s)\big)$. By \eqref{cov-funct},
$$
\frac{\partial R(T,t)}{\partial t}=
\int_0^t K_{\mfX}(T,s)  K^{(1)}(t,s) ds
$$
and therefore
\begin{equation}
\label{fBM-00c}
K_1^2(T)=\sup_{0<t<T}\frac{\partial R(T,t)}{\partial t}.
\end{equation}
In particular, for
the fractional Brownian motion $W^H$ on $[0,T]$ with the
 Hurst parameter $H>1/2$,
 $$
 R(T,t)=\frac{1}{2}\left(T^{2H}+t^{2H}-(T-t)^{2H}\right),
 $$
and  (see Nualart \cite[Section 5.1.3]{Nualart})
 $W^H$ has representation (\ref{a-proc2}) with
$$
K_{\mfX}(t,s)=C_H\left(H-\frac{1}{2}\right)
 s^{\frac{1}{2}-H}\int_s^t (\tau-s)^{H-\frac{3}{2}}
 \tau^{H-\frac{1}{2}}\, d\tau,
$$
where
$$
C_H=\left(\frac{2H\Gamma\left(\frac{3}{2}-H\right)}{\Gamma\left(H+\frac{1}{2}\right)
\Gamma(2-2H)}\right)^{\frac{1}{2}}
$$
and $\Gamma$ is the Gamma-function. In this case, $K_{\mfX}(s^{+},s)=0$
 and $K^{(1)}(t,s)\geq 0$. By \eqref{fBM-00c},
\bel{fbm-bound}
K_1^2(T)=2H\,2^{1-2H} T^{2H-1}.
\ee
The bound  $K_1(T)$ is asymptotically optimal: if $H=1/2$,
which corresponds to the standard Brownian motion,
the right-hand side of (\ref{fbm-bound}) is equal to $1$.
}
\end{example}

\begin{example} {\rm
Assume that
$$
K_{\mfX}(t,s)=\rho\big((t-s)^{\alpha})\chi_t(s),\ \alpha>0,
$$
where $\rho$ is a non-negative, monotone,  continuously
differentiable function on $[0,T]$.

If $\rho$ is non-increasing,
then  $K_0=\rho(0)$ and
$K_1^2(T)=\rho(0)\big(\rho(0)-\rho(T^{\alpha})\big)$.
In particular, consider the {\tt stable  Ornstein-Uhlenbeck process}
\begin{equation}
\label{OU}
dX(t)=-bX(t)dt+dW(t), \ X(0)=0,\ b>0,
\end{equation}
so that $X(t)=\int_0^te^{-b(t-s)}dW(s)$ and
 $K_{\mfX}(t,s)=e^{-b(t-s)}$.  For this process,
 $\|\mathcal{K}^*\|\leq 1+\sqrt{1-e^{-bT}}$.
This bound is
 asymptotically optimal: as $b\to 0$, the process becomes $W$,
  and the upper bound on $\|\mathcal{K}^*\|$ becomes $1$.

If $\rho$ is non-decreasing, then  $K_0=\rho(0)$ and
$K_1^2(T)=\rho(T^{\alpha})\big(\rho(T^{\alpha})-\rho(0)\big)$.
In particular, consider the {\tt unstable  Ornstein-Uhlenbeck process}
\begin{equation}
\label{OU-u}
dX(t)=bX(t)dt+dW(t), \ X(0)=0,\ b>0,
\end{equation}
so that  $X(t)=\int_0^te^{b(t-s)}dW(s)$ and
 $K_{\mfX}(t,s)=e^{b(t-s)}$.  For this process,
 $\|\mathcal{K}^*\|\leq 1+\sqrt{e^{bT}(e^{bT}-1)}$.
This bound is
 asymptotically optimal: as $b\to 0$, the process becomes $W$,
  and the upper bound on $\|\mathcal{K}^*\|$ becomes $1$.
  }
\end{example}

 The following theorem establishes a connection between
 a zero-mean Gaussian process and white noise.

 \begin{theorem} \label{th:gp-wn} For every zero-mean  Gaussian process
 $X=X(t), \, t\in [0,T] $ with $X(0)=0$ and the covariance
 function $R(t,s)=\bE\big(X(t)X(s)\big)$, there exist
 \begin{enumerate}
 \item
 a Hilbert space $\bH_{R}$ containing the indicator functions $\chi_t;$
 \item  a white noise $\mfB$ over $\bH_{R}$
 \end{enumerate}
 such that $X(t)=\mfB(\chi_t)$.
 \end{theorem}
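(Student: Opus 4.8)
The plan is to realize $\bH_{R}$ as the natural Hilbert-space completion of the linear span of the indicator functions $\{\chi_t:t\in[0,T]\}$, using the covariance $R$ as the prescription for the inner product, and then to define $\mfB$ on that span by $\mfB(\chi_t)=X(t)$ and extend it by the resulting isometry. This is the same device used in Remark \ref{rem-wn}(b) to pass from a degenerate field to a white noise over a larger space.

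First I would let $\mathcal{E}$ be the real vector space of finite linear combinations $f=\sum_{i=1}^{n}a_i\chi_{t_i}$ with $a_i\in\bR$, $t_i\in[0,T]$, and define a symmetric bilinear form on $\mathcal{E}$ by
$$
\Big(\sum_i a_i\chi_{t_i},\ \sum_j b_j\chi_{s_j}\Big)_{\bH_{R}}:=\sum_{i,j}a_ib_j\,R(t_i,s_j).
$$
One checks this does not depend on the chosen representation, and, crucially, that it is non-negative definite: since $R$ is the covariance function of $X$,
$$
\Big(\sum_i a_i\chi_{t_i},\ \sum_i a_i\chi_{t_i}\Big)_{\bH_{R}}=\bE\Big[\Big(\sum_i a_iX(t_i)\Big)^{2}\Big]\ge 0.
$$
Quotienting $\mathcal{E}$ by the null space $\{f:(f,f)_{\bH_{R}}=0\}$ and completing, I obtain a Hilbert space $\bH_{R}$ into which $\mathcal{E}$ maps with dense image; identifying each $\chi_t$ with its image places the indicator functions inside $\bH_{R}$ (distinct ones remain distinct unless $X(t)=X(s)$ a.s., and $\chi_0$ is the zero vector, consistent with $X(0)=0$ and $R(0,\cdot)\equiv 0$).

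Next I would set $\mfB\big(\sum_i a_i\chi_{t_i}\big):=\sum_i a_iX(t_i)$ on the image of $\mathcal{E}$; this is well defined on the quotient, because a null element $f$ satisfies $\bE\big[(\sum_i a_iX(t_i))^2\big]=0$, i.e.\ $\sum_i a_iX(t_i)=0$ a.s. By construction $\bE\big(\mfB(f)\mfB(g)\big)=(f,g)_{\bH_{R}}$ for $f,g$ in this dense subspace, so $\mfB$ is a linear isometry from a dense subspace of $\bH_{R}$ into $L_2(\Omega,\cF,\bP)$ and extends uniquely to a linear isometry $\mfB:\bH_{R}\to L_2(\Omega)$. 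It then remains to verify that this $\mfB$ is a zero-mean Gaussian field over $\bH_{R}$ in the sense of Definition \ref{Ch1:def1} with covariance operator the identity: linearity is immediate from the extension; each $\mfB(f)$ is an $L_2$-limit of Gaussian random variables, hence Gaussian, and has zero mean because each $X(t_i)$ does and $L_2$-convergence preserves the mean; the continuity property (3) of Definition \ref{Ch1:def1} follows from the isometry, since $f_n\to f$ in $\bH_{R}$ forces $\mfB(f_n)\to\mfB(f)$ in $L_2(\Omega)$, hence in probability; and $\bE\big(\mfB(f)\mfB(g)\big)=(f,g)_{\bH_{R}}$ extends from the dense subspace to all of $\bH_{R}$ by continuity, so the covariance operator of $\mfB$ is the identity. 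Since $\mfB(\chi_t)=X(t)$ by construction, the theorem follows.

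The construction is essentially routine, and the two points that actually need care both reduce to the isometry $\|\mfB(f)\|_{L_2(\Omega)}=\|f\|_{\bH_{R}}$: that the bilinear form on $\mathcal{E}$ is non-negative definite — which is precisely the assertion that $R$ is a bona fide covariance — and that the three defining properties of a generalized Gaussian field, especially the in-probability continuity (3), survive the passage to the completion. If one wants $\bH_{R}$ to be separable, to match the standing assumption of the paper, it suffices that $X$ be mean-square separable — e.g.\ mean-square continuous, in which case $t\mapsto\chi_t$ is a continuous map from $[0,T]$ into $\bH_{R}$ and the image is separable; for a general $X$ one simply accepts a possibly non-separable $\bH_{R}$, which affects only the indexing of orthonormal bases in Theorem \ref{th:basis} and nothing essential in the subsequent arguments.
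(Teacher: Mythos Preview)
Your proposal is correct and follows essentially the same construction as the paper: define $\bH_R$ as the completion of the span of $\{\chi_t\}$ under the inner product $(\chi_{t_1},\chi_{t_2})_{\bH_R}=R(t_1,t_2)$, set $\mfB(\chi_t)=X(t)$, and extend by linearity and continuity. You have in fact supplied considerably more detail than the paper's very terse proof --- the checks of non-negative definiteness, well-definedness on the quotient, Gaussianity of the $L_2$-limit, and the remark on separability are all correct and not spelled out in the original.
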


 \pr Let the Hilbert space
 $\bH_{R}$  be the closure of the set of the step functions with respect to
 the inner product
 $$
 (\chi_{t_1},\chi_{t_2})_{\bH_{R}}=R(t_1,t_2).
 $$
 Define a generalized
 Gaussian field $\mfB$ over $\bH_{R}$ by setting
 \bel{gen-f-p}
 \mfB(\chi_t)=X(t),
 \ee
   and then extending by linearity and
  continuity to all of $\bH_{R}$. With this definition, $\mfB$ is
 a white noise over $\bH_R$.
\endproof

By analogy with (\ref{wn-rep1}), if $\mfX$ is a generalized Gaussian field
over a Hilbert space $\bH$ containing $\chi_t$, $t\in [0,T]$ and
 $X(t)$ is the associated process
of $\mfX$, then $\int_0^T f(s)dX(s)$ can be an alternative notation
 for $\mfX(f)$.

 If $X(t)$ is the associated process of a zero-mean non-degenerate
 generalized Gaussian field
 $\mfX$ over $\bH=L_2((0,T))$, and $\cR$ is the covariance operator
  of $\mfX$, then
 $R(t,s)=(\cR \chi_t,\chi_s)_{L_2((0,T))}$ and the space $\bH_{R}$
 coincides with $\bH_{\cR}$ from Theorem \ref{th:wn}.

 If  $R(t,s)=\min(t,s)$, then
 $(\chi_{t_1},\chi_{t_2})_{\bH_{R}}
 =(\chi_{t_1},\chi_{t_2})_{L_2((0,T))}$.
 That is, for the Wiener process,  $\bH_R=L_2((0,T))$.
 For the fractional Brownian motion, the space $\bH_{R}$
 can be characterized using fractional derivative operators
 \cite{Nualart}.
 For a general  Gaussian process $X$ with
 covariance function $R$, an explicit
 characterization of the space $\bH_R$ is impossible.

\section{Stochastic Integration}
\label{sec:SI}

In the definition of a generalized random field $\mfX$ over a Hilbert space $\bH$,
 we consider random variables
$\mfX(f)$ for non-random $f\in \bH$. In this section, we define
$\mfX(\eta)$ for $\bH$-valued random elements $\eta$.

One possible way to proceed is to
write  the chaos expansion of $\eta$,
 $\eta=\sum_{\alpha}\eta_{\alpha}\xi_{\alpha}$, $\eta_{\alpha}\in \bH$,
 and then define $\mfX(\eta)$ as a corresponding linear combination of
 $\mfX(\eta_{\alpha})$.
 In the case $\bH=L_2((0,T))$, another possibility is to
 take a partition $0=t_0<t_1<\ldots<t_N=T$ of the interval
 $[0,T]$ and approximate $\eta(t)$ with a sum
 $\sum_{i=1}^N \eta(t_i^*)(\chi_{t_{i}}-\chi_{t_{i-1}})$,
 where $t_i^*\in [t_{i-1},t_i]$, and then approximate
 $\mfX(\eta)$ with the corresponding linear combination of
 $\mfX(\chi_{t_{i}})-\mfX(\chi_{t_{i-1}})$.

Either way, we need to address the following question.
By definition, if $\alpha,\beta$ are real numbers and $f,g$
are elements of $\bH$, then $\mfX(\alpha f+\beta g)=
\alpha\mfX(f)+\beta\mfX(g)$. But what if $\alpha$ and
$\beta$ are random variables? One possibility would be to
keep the same linearity. In the case
$\bH=L_2((0,T))$ this would
imply
\begin{equation}
\label{se-00005}
\mfX\left(\sum_{i=1}^N \eta(t_i^*)(\chi_{t_{i}}-\chi_{t_{i-1}})
\right)=
\sum_{i=1}^N\eta(t_i^*)(X(t_i)-X(t_{i-1})),
\end{equation}
where $X(t)=\mfX(\chi_t)$ is the associated process of $\mfX$.
 While natural, this extension of the
linearity property can lead to ambiguities in the definition
 of the corresponding stochastic integral. Indeed,
let $\mfB$ be a white noise over $L_2((0,T))$, and
let $\eta(t)=\mfB(\chi_t)$. By Theorem \ref{th:wn11},
$\eta$ is a standard Brownian motion $W$ and,
  as we know, the limit of the sum $\sum_{i=1}^N
  W(t_i^*)(W(t_{i+1}-W(t_i))$ depends on the
location of the points $t_i^*$.

Let us now consider an alternative to \eqref{se-00005}:
\begin{equation}
\label{se-00006}
\mfX\left(\sum_{i=1}^N \eta(t_i^*)(\chi_{t_{i}}-\chi_{t_{i-1}})
\right)=
\sum_{i=1}^N\eta(t_i^*)\diamond(X(t_i)-X(t_{i-1})).
\end{equation}
This time, if we take $\eta(t)=X(t)=W(t)$, a standard Brownian
motion, then the limit is the \Ito~integral $\int_0^TW(t)dW(t)$; it
is equal to $(W^2(T)-T)/2=(W(T))^{\diamond 2}/2$
 and does not depend on the location of the points $t_i^*$.

 Accordingly, we adopt the following convention: if
 $\mfX$ is a generalized Gaussian field over $\bH$,
 then, for every $f,g\in \bH$ and all random variables
 $\xi, \eta$,
 $$
 \mfX(\xi f+\eta g)=\xi\diamond \mfX(f)+\eta\diamond\mfX(g);
 $$
 recall that, by assumption, the underlying probability space
 $(\Omega, \mathcal{F},\mathbb{P})$ is such that
 $\mathcal{F}$ is generated by $\mfX$.
 With this convention, we proceed with the definition of the
 stochastic integral $\mfX(\eta)$ for $\bH$-valued random elements
 $\eta$ using the chaos expansion. There are at least three
   advantages of the chaos
 approach over time partitioning:
 \begin{enumerate}
 \item  generality: spaces
 other than $L_2((0,T))$ can be considered;
 \item possibility
 to use weighted chaos spaces, which  eliminates many questions about
 convergence;
 \item computational efficiency: the Wick product must only be
 computed for the basis elements $\xi_{\alpha}$.
 \end{enumerate}

 Because we are not
 restricted with the choice of $\bH$,
 we will assume that $\mfX=\mfB$, a white noise over $\bH$.
 Let $\{m_k,\ k\geq 1\}$ be an
orthonormal basis in $\bH$. Define $\xi_k=\mfB(m_k)$ and
 $\xi_{\alpha},\, \alpha \in \cI,$ according to (\ref{xi-al}).

 By Theorem \ref{th:basis},
 every $\bH$-valued random element $\eta$ with
 $\bE\|\eta\|_{\bH}^2<\infty$ has chaos expansion
\bel{rep-eta}
\eta=\sum_{\alpha\in \cJ}\eta_{\alpha} \xi_{\alpha}, \
\eta_{\alpha}=\bE(\eta\xi_{\alpha})\in \bH.
\ee

\begin{definition}\label{def:main}
Let $\eta$ be  an $\bH$-valued random element with chaos
expansion (\ref{rep-eta}).\\
The \und{\Ito-Skorokhod  stochastic integral} of $\eta$
with respect to $\mfB$ is
\bel{def:itoi}
\mfB^{\diamond}(\eta)=\sum_{\alpha\in \cI} \mfB(\eta_{\alpha})\diamond \xi_{\alpha},
\ee
where $\diamond$ is the Wick product.
\end{definition}

Since every generalized Gaussian field and every Gaussian process can be
represented using a white noise over a suitable Hilbert space,  formula
(\ref{def:itoi})  defines stochastic integral
with respect to any Gaussian process or field. We will see below that
this formula also
provides a chaos expansion of the integral in terms of the chaos expansion
of the integrand; note that  (\ref{def:itoi})
is not a chaos expansion in the sense of (\ref{rep-eta}).
 The two immediate question that are raised by the above definition
 and will be discussed below are
(a) the convergence of the series, and (b)
the dependence of the integral on the choice of the basis in $\bH$.

We start by deriving the chaos expansion of the integrals without
investigating the question of convergence.

\begin{theorem}\label{th:st-in-ce}
Let $\eta$ be  an $\bH$-valued random element
 with chaos expansion (\ref{rep-eta}),
and assume that
\bel{rep-eta-al}
\eta_{\alpha}=\sum_{k=1}^{\infty} \eta_{\alpha,k}m_k.
\ee
Then
\bel{ito-chaos}
\mfB^{\diamond}(\eta)=\sum_{\alpha\in \cI}\left( \sum_{k=1}^{\infty}
\sqrt{\alpha_k}\eta_{\alpha-\epsilon_k,k}\right) \xi_{\alpha}.
\ee
\end{theorem}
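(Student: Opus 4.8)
The plan is to substitute the given expansions directly into Definition \ref{def:main} and compute the Wick products term by term. First I would use \eqref{rep-eta-al} together with linearity of $\mfB$ to write $\mfB(\eta_\alpha)=\sum_{k\geq 1}\eta_{\alpha,k}\,\mfB(m_k)=\sum_{k\geq 1}\eta_{\alpha,k}\,\xi_k$. Hence, from \eqref{def:itoi},
\[
\mfB^{\diamond}(\eta)=\sum_{\alpha\in\cI}\sum_{k=1}^{\infty}\eta_{\alpha,k}\,\bigl(\xi_k\diamond\xi_\alpha\bigr).
\]
So the whole statement reduces to the single identity $\xi_k\diamond\xi_\alpha=\sqrt{\alpha_k+1}\,\xi_{\alpha+\epsilon_k}$, after which one re-indexes the sum by replacing $\alpha$ with $\alpha-\epsilon_k$ (legitimate exactly when $\alpha_k\geq 1$, which is why only those terms survive and the factor becomes $\sqrt{\alpha_k}$).

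The key step, then, is proving $\xi_k\diamond\xi_\alpha=\sqrt{\alpha_k+1}\,\xi_{\alpha+\epsilon_k}$. Note $\xi_k=\mfB(m_k)=H_1(\xi_k)/\sqrt{1!}=\xi_{\epsilon_k}$ in the notation of \eqref{xi-al}, so this is the special case $\alpha\rightsquigarrow\epsilon_k$, $\beta\rightsquigarrow\alpha$ of the general Wick product formula \eqref{WP-def}:
\[
\xi_{\epsilon_k}\diamond\xi_\alpha=\sqrt{\frac{(\epsilon_k+\alpha)!}{\epsilon_k!\,\alpha!}}\;\xi_{\epsilon_k+\alpha}
=\sqrt{\frac{(\alpha_k+1)!}{1!\,\alpha_k!}}\;\xi_{\alpha+\epsilon_k}=\sqrt{\alpha_k+1}\;\xi_{\alpha+\epsilon_k},
\]
since $(\epsilon_k+\alpha)!$ differs from $\alpha!$ only in the $k$-th factor, where $\alpha_k!$ is replaced by $(\alpha_k+1)!$. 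Substituting this back gives
\[
\mfB^{\diamond}(\eta)=\sum_{\alpha\in\cI}\sum_{k=1}^{\infty}\eta_{\alpha,k}\,\sqrt{\alpha_k+1}\;\xi_{\alpha+\epsilon_k},
\]
and the change of variables $\beta=\alpha+\epsilon_k$ (for each fixed $k$) turns the inner coefficient of $\xi_\beta$ into $\sqrt{\beta_k}\,\eta_{\beta-\epsilon_k,k}$, with the convention that the term is absent when $\beta_k=0$ — precisely \eqref{ito-chaos} after renaming $\beta$ back to $\alpha$.

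I do not expect a genuine obstacle here, since the excerpt explicitly flags that convergence is deferred (''without investigating the question of convergence''), so the computation is purely formal: all manipulations are term-by-term rearrangements of a series of basis elements, justified at the level of formal chaos expansions. The only point requiring a word of care is the interchange of the two sums over $\alpha$ and $k$ and the re-indexing $\alpha\mapsto\alpha-\epsilon_k$; I would simply remark that this is a formal rearrangement of an (a priori not necessarily convergent) expansion, consistent with the stated scope, and that whenever the right-hand side of \eqref{ito-chaos} does converge in $\HX$ the identity holds in $\HX$ as well. One should also note in passing that $\xi_k=\mfB(m_k)$ with $\{m_k\}$ orthonormal is indeed an orthonormal system in $\HBt$, so the $\xi_\alpha$ form the orthonormal basis of Theorem \ref{th:basis} and \eqref{rep-eta}, \eqref{def:itoi} apply verbatim.
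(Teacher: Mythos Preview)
Your proposal is correct and follows essentially the same route as the paper: expand $\mfB(\eta_\alpha)$ via linearity as $\sum_k \eta_{\alpha,k}\xi_k$, apply $\xi_k\diamond\xi_\alpha=\sqrt{\alpha_k+1}\,\xi_{\alpha+\epsilon_k}$, and then shift the summation index. The paper's proof is slightly terser (it states the Wick identity without deriving it from \eqref{WP-def}) and adds the observation that the inner sum in \eqref{ito-chaos} has only finitely many nonzero terms, but otherwise the arguments coincide.
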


\pr By (\ref{rep-eta-al}) and linearity, keeping in mind that
both $\eta_{\alpha}$ and $m_k$ are non-random,
$$
\mfB(\eta_{\alpha})
=\sum_{k=1}^{\infty}\eta_{\alpha,k}\mfB(m_k)=
\sum_{k=1}^{\infty} \eta_{\alpha,k}\xi_k.
$$
Therefore,
\bel{ito-aux}
\mfB^{\diamond}(\eta)=\sum_{\alpha\in \cI}
\sum_{k=1}^{\infty} \eta_{\alpha,k}\xi_k\diamond \xi_{\alpha}=
\sum_{\alpha\in \cI}\sum_{k=1}^{\infty}
\sqrt{\alpha_k+1}\,\eta_{\alpha,k} \, \xi_{\alpha+\epsilon_k};
\ee
 recall
that $\epsilon_k$  is the multi-index with the only non-zero entry, equal to one,
at position $k$. By shifting the summation index,
 we get (\ref{ito-chaos}).
Note that, for every $\alpha \in \cI$, the inner sum in (\ref{ito-chaos})
contains finitely many non-zero terms.

\endproof

Now, let us address the questions of convergence and independence of basis.
The Cauchy-Schwartz inequality implies that if
\bel{D12}
\sum_{\alpha\in \cI} |\alpha|\, \|\eta_{\alpha}\|^2_{\bH}<\infty,
\ee
 then
$\mfB^{\diamond}(\eta)\in \HB$. Further examination of (\ref{ito-chaos})
shows that, for every  $\eta$
 satisfying (\ref{D12}),
$\mfB^{\diamond}(\eta)=\boldsymbol{\delta}(\eta)$, where
 $\boldsymbol{\delta}$ is the divergence operator
 (adjoint of the Malliavin derivative),
 and therefore $\mfB^{\diamond}(\eta)$
does not depend on any arbitrary choices, such as the basis in $\bH$;
for details, see
 Nualart \cite{Nualart} or Watanabe \cite{Watanabe}.
In particular, if $\bH=L_2((0,T))$, then $\mfB^{\diamond}(\eta)$ is the
\Ito-Skorokhod integral of $\eta$ in the sense of the Malliavin
calculus.
On the other hand, (\ref{ito-chaos}), if considered as a
formal series, allows the extension of
 $\mfB^{\diamond}$
to weighted chaos spaces, similar to those considered in
\cite{LR2, LR1}; we leave this extension to an interested reader.

\begin{remark}
It is also possible to define
\bel{def:strati}
\mfB^{\circ}(\eta)=\sum_{\alpha\in \cI}
\mfB(\eta_{\alpha})\cdot\xi_{\alpha},
\ee
where $\cdot$ is the usual product.
To understand the structure of this  integral,
 note that the Malliavin derivative $\bD$ of $\xi_{\alpha}$
satisfies
$$
\bD \xi_{\alpha}=\sum_{k=1}^{\infty}
\sqrt{\alpha_k}\xi_{\alpha-\epsilon_k}m_k;
$$
this follows directly from the definition of
$\bD$ \cite[Definition 1.2.1]{Nualart}
and the relation $H_n'(x)=nH_{n-1}(x)$.
On the other hand, direct computations show that
$$
\mfB^{\circ}(\eta)=\sum_{\alpha\in \cI}
\sum_{k=1}^{\infty} \eta_{\alpha,k}\,\xi_k \xi_{\alpha},
$$
 and,  using the following
property of the Hermite polynomials,
$$
H_1(x)H_n(x)=H_{n+1}(x)+nH_{n-1}(x),
$$
we get
\bel{strat-chaos0}
\xi_k\xi_{\alpha}=\left(
\prod_{j\not=k}\frac{H_{\alpha_j}(\xi_j)}{\sqrt{\alpha_j!}}
\right)\frac{H_1(\xi_k)H_{\alpha_k}(\xi_k)}{\sqrt{\alpha_k!}}=
\sqrt{\alpha_k+1}\, \xi_{\alpha+\epsilon_k}+
\sqrt{\alpha_k}\xi_{\alpha-\epsilon_k},
\ee
or
\bel{strat-chaos}
\mfB^{\circ}(\eta)=
\sum_{\alpha\in \cI}
\left( \sum_{k=1}^{\infty}\left(
\sqrt{\alpha_k}\eta_{\alpha-\epsilon_k,k}+
\sqrt{\alpha_k+1}\eta_{\alpha+\epsilon_k,k}\right)\right) \xi_{\alpha}.
\ee
As a result, we use (\ref{strat-chaos0}) to re-write
 (\ref{strat-chaos}) as
\bel{strat-chaos1}
\mfB^{\circ}(\eta)=\mfB^{\diamond}(\eta)+\sum_{\alpha\in \cI}
(\eta_{\alpha},\bD \xi_{\alpha})_{\bH}.
\ee
That is, $\mfB^{\circ}$ is a sum of the \Ito-Skorokhod integral
plus the trace of the Malliavin derivative --- a
representation characteristic of the Stratonovich-type integrals
\cite{Nualart}.
In particular, if $\mfB$ is a white noise over $L_2((0,T))$
and $W(t)=\mfB(\chi_t)$, then
$\mfB^{\circ}(W)=W^2(T)=\int_0^TW(t)\circ dW(t)$.
More generally,
 if $\eta=\eta(t)$
is in the domain of the Malliavin derivative, then
\bel{strat-chaos2}
\mfB^{\circ}(\eta)=\mfB^{\diamond}(\eta)+\int_0^T\bD_t \eta dt,
\ee
where
$$
\bD_t \eta=\sum_{\alpha\in \cI}\eta_{\alpha}(t)
\left(\sum_{k=1}^{\infty}\sqrt{\alpha_k}\,\xi_{\alpha-\epsilon_k}m_k(t)
\right);
$$
unlike the \Ito-Skorokhod integral, though, condition
  (\ref{D12}) is not enough to ensure the existence of
  $\mfB^{\circ}(\eta)$
  as an element of $\HX$. When $\bH$ is the Hilbert space of
  functions on an interval $[0,T]$, square integrable
   with respect to a (not necessarily Lebesque)
    measure $\mu$, the sufficient conditions for
    the Stratonovich integrability
    are discussed in \cite[Chpater 3]{Nualart}.
 \end{remark}

 Let $\mfX$ be a zero-mean non-degenerate
 generalized Gaussian field over a separable Hilbert space $\bH$.
 As we mentioned earlier, by Theorem \ref{th:wn}(b) on page
 \pageref{th:wn},
  $\mfX$ is a white
 noise over a bigger Hilbert space $\bH_{\cR}$, and then
  $\mfX^{\diamond}(\eta)$ can be defined using (\ref{def:itoi}). If the
 space $\bH_{\cR}$ is difficult to describe,
 one can use representation (\ref{wn11}) from
  Theorem \ref{th:wn}(a) and derive an equivalent
 formula for the stochastic integral:
 \bel{def:itoi111}
 \mfX^{\diamond}(\eta)=\mfB^{\diamond}(\cK^* \eta).
 \ee
 for every $(\mfB, \bH)$-admissible $\eta$.

 Unlike (\ref{def:itoi}), representation (\ref{def:itoi111}) is not
 intrinsic: the operator $\cK^*$ and the white noise $\mfB$ are not
 uniquely determined by $\mfX$. On the other hand, in many examples,
 such as fractional Brownian motion with the Hurst parameter bigger than $1/2$,
 it is possible to take $\bH=L_2((0,T))$, and then
 (\ref{def:itoi111}) becomes more convenient than (\ref{def:itoi}).
 To derive the chaos expansion of $\mfX^{\diamond}(\eta)$ using
 (\ref{def:itoi111}), fix  an orthonormal basis $\{m_k,\, k\geq 1\}$
 in $\bH$, define $\xi_k=\mfB(m_k)$,
 and consider the corresponding orthonormal basis
 $\{\xi_{\alpha}, \alpha\in \cI\}$ in $\HB$
  constructed according to (\ref{xi-al}).
  It follows from (\ref{ito-chaos}) that
 \bel{ito-chaos-alt}
\mfX^{\diamond}(\eta)=\sum_{\alpha\in \cI}\left( \sum_{k=1}^{\infty}
\sqrt{\alpha_k}\,\wt{\eta}_{\alpha-\epsilon_k,k}\right) \xi_{\alpha},
\ee
where
$$
 \wt{\eta}_{k,\alpha}=\bE\big((\cK^*\eta,m_k)_{\bH}\,\xi_{\alpha}\big).
 $$

If $\bH=L_2((0,T))$, then (\ref{ito-chaos-alt}) becomes
\bel{st-int-25}
\mfX^{\diamond}(\eta)=\sum_{\alpha\in \cI}
\left(\sum_{k\geq 1} \sqrt{\alpha_k}
\left(\int_0^T \eta_{\alpha-\epsilon_k}(t) (\cK m_k)(t) dt\right)\,
\right)\, \xi_{\alpha},
\ee
where $\eta_{\alpha}(t)=\bE\big(\eta(t) \, \xi_{\alpha}\big)$.
In this case,
by analogy with the Brownian motion,
$\int_0^T \eta(s)\diamond dX(s)$
can be an
alternative notation for $\mfX^{\diamond}(\eta)$,
where $X(t)$ is the associated process of $\mfX$.

\section{Stochastic Evolution Equations with
Closed-Form Solutions}
\label{sec:see1}

In this section we consider stochastic differential equations
driven by a white noise $\mfB$ or some other
 zero-mean generalized Gaussian random field $\mfX$ over a
 Hilbert space $\bH$.
To  introduce time
evolution into the stochastic integral, we
use the function $\chi_t$, the indicator function of the interval
$[0,t]$, and define time-dependent stochastic integrals
\bel{time-evol}
\mfB^{\diamond}_t(\eta):=\mfB^{\diamond}(\eta\chi_t),\ \ \ \
\mfX^{\diamond}_t(\eta):=\mfX^{\diamond}(\eta\chi_t).
\ee
These definitions put an obvious restriction on the Hilbert space
$\bH$, which we call Property I: {\em  $\bH$
contains $\chi_t$, $t\in [0,T]$,
 and, for every $\eta\in \bH$ and every fixed $t$,
the (point-wise) product $\eta\chi_t$
is defined and belongs to $\bH$.} There is a more significant
restriction on $\bH$, which we illustrate on the following
equation:
\bel{sde00}
u(t)=1+ \mfB^{\diamond}_t(u),\ 0\leq t \leq T,
\ee
where $\mfB$ is white noise over a Hilbert space $\bH$ with Property I.
Let us assume that the solution belongs to $\HB$ so that
$u(t)=\sum_{\alpha \in \cI} u_{\alpha}(t)\xi_{\alpha}$ and
each $u_{\alpha}$ is an element of $\bH$. By (\ref{time-evol}),
we can re-write (\ref{sde00}) as
\bel{sde01}
u(t)=1+\mfB^{\diamond}(u\chi_t),
\ee
and then (\ref{ito-chaos}) implies
\bel{sde02}
u_{\alpha}(t)=1+\sum_{k=1}^{\infty}(u_{\alpha-\epsilon_k}\chi_t,m_k)_{\bH}.
\ee
Thus, the expression $(u_{\alpha-\epsilon_k}\chi_t,m_k)_{\bH}$, as a
function of $t$, must be an element of $\bH$, and the
Hilbert space $\bH$ must have another special property,
which we call Property II: {\em for every
$f,g\in \bH$, the inner product $(f\chi_t,g)_{\bH}$, as a function of
$t$, is an element of $\bH$}. The space $\bH_R$ corresponding to
a zero-mean Gaussian process with covariance $R$ contains
step functions by definition, but the point-wise multiplication is a
more delicate issue, especially if $\bH_R$ is smaller than
$L_2((0,T))$. On the other hand,
the space
$L_2((0,T),\mu)$, with $\mu((0,T))<\infty$, has both
Property I and Property II, which follows from the Cauchy-Schwartz
inequality.

\begin{theorem} \label{th:sode1}
If $\mfX$ is a zero-mean generalized Gaussian field over $L_2((0,T))$, then
the solution of the equation
\bel{sde2}
u(t)=1+\mfX^{\diamond}_t(u)
\ee
is unique in $L_2((0,T); \HX)$ and is given by
\bel{stoch-exp1}
u(t)=e^{\diamond X(t)},
\ee
where $e^{\diamond}$ is the Wick exponential function (\ref{Wick-exp})
 and $X(t)=\mfX(\chi_t)$ is the associated process of $\mfX$.
\end{theorem}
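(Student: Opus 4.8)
The plan is to convert the integral equation into an infinite system of deterministic equations for the chaos coefficients $u_\alpha(t)$, solve that system recursively, and then recognize the resulting coefficients as the chaos expansion of the Wick exponential $e^{\diamond X(t)}$. First I would fix an orthonormal basis $\{m_k,\ k\ge 1\}$ in $L_2((0,T))$, use Theorem \ref{th:wn}(a) to write $\mfX(f)=\mfB(\cK^*f)$ for a white noise $\mfB$ over $L_2((0,T))$, and set $\xi_k=\mfB(m_k)$ and build the basis $\{\xi_\alpha\}$ via \eqref{xi-al}. Writing the candidate solution as $u(t)=\sum_\alpha u_\alpha(t)\xi_\alpha$ with $u_\alpha\in L_2((0,T))$ and applying \eqref{ito-chaos-alt} (the chaos expansion of $\mfX^\diamond$), equation \eqref{sde2} becomes, coefficient-wise,
\bel{plan-sys}
u_{\boz}(t)=1,\qquad
u_\alpha(t)=\sum_{k\ge 1}\sqrt{\alpha_k}\int_0^t u_{\alpha-\epsilon_k}(s)\,(\cK m_k)(s)\,ds\quad(|\alpha|\ge 1),
\ee
where $\boz$ is the zero multi-index. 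Since the right-hand side of \eqref{plan-sys} only involves $u_\beta$ with $|\beta|=|\alpha|-1$, this determines all $u_\alpha$ uniquely by induction on $|\alpha|$, which immediately gives uniqueness in $L_2((0,T);\HX)$ (modulo the convergence bookkeeping below); existence then follows by verifying the closed form solves \eqref{plan-sys}.

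Next I would compute the chaos expansion of $e^{\diamond X(t)}$ explicitly and match it against \eqref{plan-sys}. Since $X(t)=\mfX(\chi_t)=\mfB(\cK^*\chi_t)$ lies in the first chaos $\HB^{(1)}$, formula \eqref{WickExp} gives $e^{\diamond X(t)}=e^{X(t)-\frac12\bE X^2(t)}$, and the standard generating-function identity for Hermite polynomials, $\exp(ta-\tfrac12 t^2)=\sum_{n\ge 0}\frac{t^n}{n!}H_n(a)$, applied in each coordinate after writing $\cK^*\chi_t=\sum_k a_k(t)m_k$ with $a_k(t)=(\cK^*\chi_t,m_k)_{L_2}=\int_0^t(\cK m_k)(s)\,ds$, yields
\bel{plan-exp}
e^{\diamond X(t)}=\sum_{\alpha\in\cI}\frac{1}{\sqrt{\alpha!}}\Big(\prod_{k\ge 1}a_k(t)^{\alpha_k}\Big)\,\xi_\alpha .
\ee
So the claimed coefficient is $u_\alpha(t)=\frac{1}{\sqrt{\alpha!}}\prod_k a_k(t)^{\alpha_k}$, and one checks directly that these satisfy \eqref{plan-sys}: differentiating, $\frac{d}{dt}\big(\prod_k a_k(t)^{\alpha_k}\big)=\sum_k \alpha_k a_k'(t)\,a_k(t)^{\alpha_k-1}\prod_{j\ne k}a_j(t)^{\alpha_j}$ with $a_k'(t)=(\cK m_k)(t)$, and dividing by $\sqrt{\alpha!}$ reproduces exactly the summand $\sqrt{\alpha_k}\,u_{\alpha-\epsilon_k}(t)(\cK m_k)(t)$ after integrating from $0$ to $t$ (using $a_k(0)=0$, hence $u_\alpha(0)=0$ for $|\alpha|\ge 1$ and $u_{\boz}\equiv 1$).

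The main obstacle I expect is the analytic bookkeeping: verifying that $e^{\diamond X(t)}$ actually belongs to $\HX$ for each $t$ and that $u\in L_2((0,T);\HX)$, i.e.\ that $\sum_\alpha \|u_\alpha\|_{L_2((0,T))}^2<\infty$, and — more subtly — that $u$ lies in the domain where $\mfX^\diamond_t(u)$ is defined so that \eqref{ito-chaos-alt} legitimately applies (condition \eqref{D12} or its $\cK^*$-analogue). For membership in $\HX$ one uses $\bE\big(e^{\diamond X(t)}\big)^2=\sum_\alpha \frac{1}{\alpha!}\prod_k a_k(t)^{2\alpha_k}=\exp\big(\sum_k a_k(t)^2\big)=\exp\big(\|\cK^*\chi_t\|_{L_2}^2\big)=\exp(\bE X^2(t))<\infty$ because $\cK^*$ is bounded; integrating in $t$ over $(0,T)$ handles the $L_2((0,T);\HX)$ claim. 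For the applicability of \eqref{ito-chaos-alt} I would either invoke that $e^{\diamond X(t)}$ is a genuine Wick exponential of a first-chaos element (for which $\mfX^\diamond$ reduces to the Skorokhod integral and the identity $\mfX^\diamond_t(e^{\diamond X})=e^{\diamond X(t)}-1$ is classical, cf.\ \cite{Nualart}), or argue directly that the coefficient identities above force the series \eqref{ito-chaos-alt} to converge in $L_2((0,T);\HX)$; in either case the recursion \eqref{plan-sys} closes the argument. Uniqueness needs no integrability assumption beyond $u\in L_2((0,T);\HX)$, since the recursion on $|\alpha|$ pins down every coefficient, so I would state uniqueness first and then present the closed form as the verification of existence.
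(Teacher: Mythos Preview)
Your proposal is correct and follows essentially the same route as the paper: reduce \eqref{sde2} via the chaos expansion \eqref{ito-chaos-alt} to the recursion \eqref{plan-sys}, obtain uniqueness by induction on $|\alpha|$, then verify that the coefficients $u_\alpha(t)=\frac{1}{\sqrt{\alpha!}}\prod_k a_k(t)^{\alpha_k}$ (the paper writes $a_k=\wt{M}_k$) of the Wick exponential satisfy the recursion by direct differentiation. Your additional discussion of square-integrability and the domain of $\mfX^{\diamond}_t$ goes beyond what the paper makes explicit, but the core argument is identical.
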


\pr Let $\mfX(f)=\mfB(\cK^* f)$ be a white noise
representation of $\mfX$ over $L_2((0,T))$.
We start by establishing uniqueness of solution
 in $L_2((0,T);\HB)$, which, because of
the inclusion $\HX\subseteq\HB$, is even stronger.
  By linearity, the difference
$Y(t)$ of two solutions of (\ref{sde2}) satisfies $Y(t)=\mfX^{\diamond}_t(Y).$
If $Y(t)=\sum\limits_{\alpha\in \cI} y_{\alpha}(t)\xi_{\alpha}$, then
(\ref{st-int-25}) implies
\bel{s-syst1}
y_{\alpha}(t)=\sum_{k\geq 1}\sqrt{\alpha_k}
\int_0^ty_{\alpha-\epsilon_k}(s)\wt{m}_k(s)ds,
\ee
where $\wt{m}_k=\cK m_k$. In particular, if $|\alpha|=0$, then
$y_{\alpha}(t)=0$ for all $t$. By induction on $|\alpha|$,
$y_{\alpha}(t)=0$ for all $\alpha \in \cI$: if $y_{\alpha}=0$
for all $\alpha$ with $|\alpha|=n$, then, since
$|\alpha-\epsilon_k|=|\alpha|-1$, equality (\ref{s-syst1})
implies $y_{\alpha}=0$ for all  $\alpha$ with $|\alpha|=n+1$.

To establish (\ref{stoch-exp1}), let
$$
\wt{M}_k(t)=\int_0^t (\cK m_k)(s)ds.
$$
By (\ref{wn:wp}) and \eqref{wne1},
$$
X(t)=\sum_{k=1}^{\infty} \wt{M}_k(t)\xi_k,
$$
and, because of the independence of $\xi_k$ for different $k$,
$$
e^{\diamond X(t)}=\prod_{k\geq 1} e^{\diamond \wt{M}_k(t)\xi_k}=
\sum_{\alpha\in \cI} \frac{\wt{M}^{\alpha}(t)}{\sqrt{\alpha!}}\, \xi_{\alpha},
$$
where
$$
\wt{M}^{\alpha}(t)=\prod_{k=1}^{\infty} \wt{M}_k^{\alpha_k}(t).
$$
Similar to (\ref{s-syst1}), we conclude that if the solution $u=u(t)$ has the
chaos expansion $u(t)=\sum\limits_{\alpha\in \cI}
u_{\alpha}(t)\xi_{\alpha}$, then $u_{\alpha}(t)=1$ if
$|\alpha|=0$ and
\bel{s-syst12}
u_{\alpha}(t)=\sum_{k\geq 1}\sqrt{\alpha_k}
\int_0^tu_{\alpha-\epsilon_k}(s)\wt{m}_k(s)ds,
\ee
if $|\alpha|>0$. Then direct computations show that
$$
u_{\alpha}(t)=\frac{\wt{M}^{\alpha}(t)}{\sqrt{\alpha!}}, \ |\alpha|\geq 1,
$$
satisfies (\ref{s-syst12}):
\begin{equation*}
\begin{split}
\frac{d u_{\alpha}(t)}{dt}&=\frac{1}{\sqrt{\alpha!}}\,\frac{d}{dt}\prod_{k=1}^{\infty}
\wt{M}_k^{\alpha_k}(t)=\frac{1}{\sqrt{\alpha!}}
\sum_{k=1}^{\infty} \alpha_k \wt{M}_k^{\alpha_k-1}(t)\wt{m}_k(t)\prod_{j\not=k}
\wt{M}_j^{\alpha_j}(t)\\
&=\sum_{k=1}^{\infty}\sqrt{\alpha_k}\, \wt{m}_k(t)
 \frac{\wt{M}^{\alpha-\varepsilon_k}(t)}{\sqrt{(\alpha-\varepsilon_k)!}}=
 \sum_{k=1}^{\infty}\sqrt{\alpha_k}\, m_k(t) u_{\alpha-\varepsilon_k}(t).
 \end{split}
 \end{equation*}
\endproof

\begin{corollary}
\label{c-geom}
The solution of
$$
u(t)=u_0+\int_0^t a(s)u(s)ds+\sum_{k=1}^N \mfX_t^{\diamond}
(\sigma_ku_k)
$$
where $\mfX_k$ are independent generalized Gaussian random fields
over $L_2((0,T))$ and $a\in L_1((0,T)),\ \sigma_k\in L_2((0,T))$
are non-random, is
$$
u(t)=u_0\exp\left(\int_0^ta(s)ds\right)\exp^{\diamond}
\left(\sum_{k=1}^NX_{k}(t)\right),
$$
where $X_k(t)=\mfX_k(\sigma_k\chi_t)$.
\end{corollary}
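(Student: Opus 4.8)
The plan is to reduce Corollary~\ref{c-geom} to Theorem~\ref{th:sode1} by two standard manipulations: separating out the deterministic drift via an integrating factor, and reducing the sum of independent Gaussian integrals to a single one. First I would set $v(t)=u(t)\exp\left(-\int_0^ta(s)ds\right)$, so that formally $u(t)=v(t)\exp\left(\int_0^ta(s)ds\right)$; substituting into the equation and using that $a$ and the $\sigma_k$ are non-random (so they pass through the Wick integral by linearity, exactly as in the proof of Theorem~\ref{th:st-in-ce}), the ordinary-integral drift term is absorbed and one is left with an equation of the pure form $v(t)=u_0+\sum_{k=1}^N \mfX_{k,t}^{\diamond}(\sigma_k v)$. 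The one subtlety here is that multiplying a chaos-expanded process by the scalar function $\exp\left(\int_0^ta(s)ds\right)$ is legitimate because $a\in L_1((0,T))$ makes this factor bounded on $[0,T]$, so it preserves membership in $L_2((0,T);\HX)$ and commutes with the chaos coefficients.

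Next I would combine the $N$ independent fields into one. Since $\mfX_1,\dots,\mfX_N$ are independent zero-mean generalized Gaussian fields over $L_2((0,T))$, one can build a single zero-mean generalized Gaussian field $\mfX$ over $L_2((0,T);\mathbb{R}^N)$ (or equivalently over an $N$-fold direct sum of copies of $L_2((0,T))$) by $\mfX(f_1,\dots,f_N)=\sum_{k=1}^N\mfX_k(f_k)$; independence guarantees this is Gaussian with the right covariance structure. Applying the Wick-integral formalism to $\mfX$ with integrand $\eta(t)=(\sigma_1(t)v(t),\dots,\sigma_N(t)v(t))$ gives $\mfX^{\diamond}_t(\eta)=\sum_{k=1}^N\mfX_{k,t}^{\diamond}(\sigma_k v)$, so $v$ solves $v(t)=u_0+\mfX^{\diamond}_t(v\,\sigma)$ where $\sigma=(\sigma_1,\dots,\sigma_N)$. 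The direct-sum space still has Property~I and Property~II because each summand does and $\sigma_k\in L_2((0,T))$; alternatively, and perhaps more cleanly for exposition, I would simply invoke the chaos-expansion characterization directly: writing $v(t)=\sum_\alpha v_\alpha(t)\xi_\alpha$ in a basis adapted to the joint family, equation~\eqref{st-int-25} applied to each $\mfX_k$ shows the coefficients $v_\alpha$ satisfy the same triangular ODE system~\eqref{s-syst12} as in Theorem~\ref{th:sode1}, now with the combined kernels; uniqueness then follows by the same induction on $|\alpha|$.

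For the explicit form, by (the scaled version of) Theorem~\ref{th:sode1} the solution of $v(t)=u_0+\mfX^{\diamond}_t(\sigma v)$ is $v(t)=u_0\,e^{\diamond Y(t)}$ where $Y(t)$ is the associated process of the field $f\mapsto\mfX^{\diamond}(f\sigma)$ evaluated at $\chi_t$, i.e.\ $Y(t)=\sum_{k=1}^N\mfX_k(\sigma_k\chi_t)=\sum_{k=1}^NX_k(t)$. Undoing the integrating factor gives
$$
u(t)=u_0\exp\left(\int_0^ta(s)ds\right)e^{\diamond\left(\sum_{k=1}^NX_k(t)\right)},
$$
which is the claimed formula (the paper writes $\exp^{\diamond}$ for $e^{\diamond}$). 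Linearity of the Wick integral in the (non-random) coefficients $\sigma_k$ and in $u_0$ is used repeatedly and is immediate from Theorem~\ref{th:st-in-ce}.

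The main obstacle I anticipate is purely organizational rather than deep: making precise that the scaled equation falls literally under Theorem~\ref{th:sode1}. Theorem~\ref{th:sode1} is stated with initial value $1$ and with no drift, over $L_2((0,T))$; here we have a general $u_0$, a drift, and a vector of noises. The cleanest route is to observe that multiplying the whole equation by $u_0^{-1}$ (if $u_0\neq0$; the case $u_0=0$ forces $u\equiv0$ by the uniqueness induction) and applying the integrating factor reduces exactly to the ``$v(t)=1+\mfX^{\diamond}_t(\sigma v)$'' template, and that the proof of Theorem~\ref{th:sode1} never used that the field was exactly $\mfX$ rather than the modified field $f\mapsto\mfX(\sigma f)$ — it only used the triangular structure of~\eqref{s-syst12}, which persists. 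I would therefore either (i) state and use a one-line ``scaling/integrating-factor'' remark, or (ii) redo the short induction of Theorem~\ref{th:sode1}'s proof directly for the coefficients $u_\alpha(t)$ here, verifying that $u_\alpha(t)=u_0\exp\left(\int_0^ta\right)\wt{M}^\alpha(t)/\sqrt{\alpha!}$ with $\wt{M}_k(t)=\int_0^t(\cK m_k)(s)\,ds$ for the combined kernel solves the system — the computation is identical to the displayed one at the end of the proof of Theorem~\ref{th:sode1}, with the extra factor differentiating through the product rule harmlessly.
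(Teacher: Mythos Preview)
The paper gives no proof for this corollary; it is stated immediately after Theorem~\ref{th:sode1} as a direct consequence and the text moves on. Your proposal is correct and supplies exactly the details the paper suppresses: the integrating-factor reduction to remove the deterministic drift, the bundling of the independent fields $\mfX_1,\dots,\mfX_N$ into a single field over a direct sum (or, equivalently, running the chaos-coefficient induction~\eqref{s-syst12} directly with the combined kernels), and the observation that the proof of Theorem~\ref{th:sode1} only uses the triangular structure of the propagator system and so goes through unchanged for the modified field $f\mapsto\mfX(\sigma f)$. Either of your two routes (reduce-then-cite or redo-the-induction) is fine; the second is slightly more self-contained since it avoids having to formalize the direct-sum field.
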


Theorem \ref{th:sode1} is a generalization of the
familia result that the geometric Brownian motion
$u(t)=e^{W(t)-(t/2)}=e^{\diamond W(t)}$ satisfies
$u(t)=1+\int_0^tu(s)dW(s)$: by  (\ref{stoch-exp1}) and (\ref{WickExp}),
  for a class of zero-mean Gaussian processes $X=X(t)$ with
covariance function $R(t,s)$, and with a suitable interpretation of the
stochastic integral, the solution of the equation
$u(t)=1+\int_0^tu(s)\diamond dX(s)$ is
$$
u(t)=e^{X(t)-X(0)-\frac{1}{2}(R(t,t)-2R(t,0)+R(0,0))}.
$$
Note that the proof works without assuming that $\mfX$ is non-degenerate
or $X$ is non-anticipating. On the other hand,
the special interpretation of the integral is essential: if
$X$ is also is a semi-martingale, then the traditional
\Ito~integral $\int_0^T\eta(t)dX(t)$ can also be defined, and the
solution of the corresponding equation
$U(t)=1+\int_0^tu(s)dX(s)$ is the Dolean exponential
$$
U(t)=e^{X(t)-X(0)-\frac{1}{2}\langle X \rangle_t}.
$$
To see that $u$ and $U$ can be different, consider the stable
Ornstein-Uhlenbeck process \eqref{OU},
for which $u(t)=\exp\big(X(t)-(1-e^{-bt})/(2b)\big)$ and
$U(t)=\exp\big(X(t)-t/2\big)$. The reason for this difference is
clear: while $U$ satisfies the equation $dU(t)=-bX(t)U(t)dt+U(t)dW(t)$,
$u(t)$ satisfies $du(t)=-bX(t)\diamond u(t)dt+u(t)dW(t)$
(it is known that $dW=\diamond dW$; see \cite{HOUZ}).

As an application of Theorem \ref{th:sode1}, let us find the
classical, square-integrable
solution of the stochastic partial differential equation
\begin{equation}
\label{SPDE00}
u(t,x)=u_0(x)+a\int_0^t u_{xx}(s,x)ds+
\sigma \mfX^{\diamond}_t(u_x(\cdot,x)),\ t\geq 0,\ x\in \bR,
\end{equation}
with a smooth compactly supported initial condition $u_0$ and constant
$a>0$, $\sigma\in \bR$.
Let
$$
\widehat{u}(t,y)=\frac{1}{\sqrt{2\pi}}
\int_{\bR} e^{-\mathfrak{i} xy}u(t,x)dx,\ \mathfrak{i}=\sqrt{-1},
$$
be the Fourier transform of $u(t,x)$. Then, by linearity,
$$
\widehat{u}(t,y)=\widehat{u}_0(y)-ay^2\int_0^t\widehat{u}(s,y)ds
+\mathfrak{i} \sigma \mfX_t(\widehat{u}(\cdot,y)),
$$
and therefore
$$
\widehat{u}(t,y)=\widehat{u}_0\exp\left(
-ay^2t+\frac{1}{2}\sigma^2y^2R(t,t)+\mathfrak{i}y\sigma X(t)
\right),
$$
where $X(t)$ is the associated process of $\mfX$ and $R(t,s)=
\mathbb{E}(X(t)X(s))$. With the notation
$$
r(t)=at-\frac{\sigma^2}{2}R(t,t),
$$
the classical solution of \eqref{SPDE00} becomes
$$
u(t,x)=\bar{u}(t,x-\sigma X(t)),
$$
where
$$
\bar{u}(t,x)=\frac{1}{\sqrt{4\pi r(t)}}\int_{\bR}
\exp\left(-\frac{(x-y)^2}{4r(t)}\right) u_0(y)dy.
$$
In particular, we get the following {\em parabolicity} or
{\em non-explosion} condition for \eqref{SPDE00}:
\begin{equation}
\label{nonexpl}
at\geq\frac{\sigma^2}{2}R(t,t).
\end{equation}
If  $X(t)=W(t)$, then  $R(t,t)=t$ and
 \eqref{nonexpl} becomes
 \begin{equation}
\label{parab-cl}
2a\geq \sigma^2.
\end{equation}
If  $X(t)=W^H(t)$, $H>1/2$, then  $R(t,t)=t^{2H}$ and
\eqref{nonexpl} becomes
\begin{equation}
\label{parab-fBM}
t^{2H-1}\leq 2a/\sigma^2;
\end{equation}
 this condition also appears in  \cite{Dunc1}.

For the stable  Ornstein-Uhlenbeck process \eqref{OU},
$R(t,t)=(1-e^{-2bt})/(2b)$,
 and condition \eqref{nonexpl} becomes
$$
a\geq\frac{\sigma^2}{4bt}\left(1-e^{-2bt}\right),
$$
which is  equivalent to \eqref{parab-cl}.

For the unstable Ornstein-Uhlenbeck process \eqref{OU-u},
condition \eqref{nonexpl} is
\begin{equation}
\label{OU-uu}
at\geq \frac{\sigma^2}{4b}\left(e^{2bt}-1\right).
\end{equation}
If \eqref{parab-cl} holds, then \eqref{OU-uu} holds
for sufficiently small $t$; if \eqref{parab-cl} fails, so does
\eqref{OU-uu} for all $t\geq 0$.

 The traditional \Ito~version  of
 \eqref{SPDE00} with the Ornstein-Uhlenbeck process is
$$
du(t,x)=(au_{xx}(t,x)\pm b\sigma X(t)u_x(t))dt+\sigma u_x dW(t),
$$
This equation
 is well-posed if and only \eqref{parab-cl} holds,
 and this condition does not depend on $b$.

We can also write the classical, square-integrable solution of
\begin{equation}
\label{SPDE01}
u(t,x)=u_0(x)+\int_0^t a(s)u_{xx}(s,x)ds+
 \mfX^{\diamond}_t(\sigma u_x(\cdot,x)),\ t\geq 0,\ x\in \bR,
\end{equation}
with a smooth compactly supported initial condition $u_0$ and
continuous functions $a(t), \ \sigma(t)$.
Indeed, let
\begin{equation*}
\begin{split}
&A(t)=\int_0^ta(s)ds,
 \ X_{\sigma}(t)=\mfX(\sigma\chi_t),\\
 & \ R_{\sigma}(t,s)=\mathbb{E}(X_{\sigma}(t)X_{\sigma}(s)),
   \ r_{\sigma}(t)=A(t)-\frac{1}{2}R_{\sigma}(t,t).
\end{split}
\end{equation*}
Then
$$
u(t,x)=\bar{u}(t,x - X_{\sigma}(t)),
$$
where
$$
\bar{u}(t,x)=\frac{1}{\sqrt{4\pi r_{\sigma}(t)}}\int_{\bR}
\exp\left(-\frac{(x-y)^2}{4r_{\sigma}(t)}\right) u_0(y)dy.
$$
The parabolicity condition for \eqref{SPDE01} is
$$
r_{\sigma}(t)\geq 0,
$$
which in general cannot be simplified much further: when
$\sigma$ depends on time,  there is no
easy connection between $R$ and $R_{\sigma}$.

\section{Chaos Solution of Stochastic Evolution Equations}
\label{sec:see2}

Let $\mfX_{\ell},\ \ell\geq 1$, be a collection of independent,
zero-mean, generalized Gaussian random fields over $L_2((0,T))$
on a probability space $(\Omega, \cF,\mathbb{P})$.
By Theorem \ref{th:wn11} on page \pageref{th:wn11},
there is a collection
$\{W_{\ell}, \ell\geq 1\}$ of independent Wiener processes
and $\{\cK_{\ell},\ \ell\geq 1\}$ of bounded linear operators on
$L_2((0,T))$ such that
\begin{equation}
\label{rf-rep}
\mfX_{\ell}(f)=\int_0^T (\cK_{\ell}^* f)(t)dW_{\ell}(t).
\end{equation}
{\tt With no loss of generality, we assume that
the sigma-algebra $\cF$ is \\ generated by the
random variables $W_{\ell}(t)$, $\ell\geq 1$, $t\in [0,T]$.}
Note that $X_{\ell}(t)=\mfX_{\ell}(\chi_t)$ is not necessarily
adapted to filtration of the corresponding Wiener process
$W_{\ell}(t)$.

Introduce the following objects:
\begin{enumerate}
\item $(X,H,X')$,  a  triple of Hilbert spaces
such that $X'$ is the dual of $X$ relative to the inner product
in $H$. {\tt To simplify the notations, we use $(\cdot,\cdot)$ to
denote both the inner product in $H$ and the duality between
$X$ \\ and $X'$.}
\item $A$, a  bounded linear operator
from $L_2((0,T);X)$ to $L_2((0,T);X').$
\item $M_{\ell}$, $\ell\geq 1$, a collection of
bounded linear operators from $L_2((0,T);X)$ to $L_2((0,T);X').$
\item $u_0\in L_2(\Omega;H)$, $f\in L_2(\Omega;L_2((0,T);X'))$,
$g_{\ell}\in L_2(\Omega;L_2((0,T);X'))$.
\item The Fourier cosine basis in $L_2((0,T))$:
\begin{equation}\label{basis}
 m_1(s)\!=\!\frac 1{\sqrt{T}};\  m_k(t)\!=
 \!\sqrt{\frac{2}{T} }
 \cos \left( \frac{\pi (k-1) t}{T} \right), \, k>1; \
0\leq t \leq T.
\end{equation}
Also define  $\widetilde{m}_{k\ell}(t)=(\cK_{\ell} m_k)(t)$.
\item $\cJ$, a collection of multi-indices $\bola=
(\alpha_{k\ell},\ k,\ell\geq 1)$ with non-negative
integer entries $\alpha_{k\ell}$ and the finite length
$|\bola|=\sum_{k,\ell}\alpha_{k\ell}<\infty$. The two special
multi-indices are $\boz$ with all zero entries and
$\bole(ij)$ with $\bole_{ij}(ij)=1$ and $\bole_{k\ell}(ij)=0$
 otherwise. We also write $\bola!=\prod_{k,\ell}\alpha_{k\ell}!$.
\item The random variables $\xi_{k\ell}=\int_0^Tm_k(t)dW_{\ell}(t)$
and
\bel{eq:xi-al}
\xi_{\bola}=
\prod_{k,\ell\geq 1}
\frac{H_{\alpha_{k\ell}}(\xi_{k\ell})}
{\sqrt{\alpha_{k\ell}!}},
 \ee
where, for an integer $n\geq 0$,
   $H_n=H_n(t)$ is the $n$-th {Hermite polynomial}
   \eqref{eq:HerPol}.
   By Theorem \ref{th:basis}, the collection
   $\{\xi_{\bola},\ \bola\in \cJ\}$, is an orthonormal basis
   in $L_2(\Omega)$.
\item the {\tt characteristic set}
of $\bola$ with $|\bola|=n$:
$S_{\bola}=\{(k_1, \ell_1), \ldots, (k_n,\ell_n)\}$,
where  $k_1\leq k_2\leq\cdots\leq k_n$,
$\ell_i\leq \ell_{i+1}$ if $k_i=k_{i+1}$, and
\begin{equation}
\label{xial-alt}
\xi_{\bola}= \frac{\xi_{k_{1}\ell_1}
\diamond\xi_{k_{2}\ell_2}
\diamond\cdots\diamond\xi_{k_{n}\ell_n}}
{\sqrt{\bola!}}.
\end{equation}
\end{enumerate}

Consider the following stochastic evolution equation:
\begin{equation}
\label{evolII.1}
u(t)=u_0+\int_0^tA u(s)ds+\int_0^tf(s)ds+
\sum_{\ell\geq 1} \mfX_{\ell,t}^{\diamond}( M_{\ell}u+g_{\ell}).
\end{equation}

Assume that the random element $u_0$ and the
processes $u,f,g_{\ell}$ have chaos expansions
\begin{equation}
\label{expan000}
\begin{split}
u_0=\sum_{\bola\in \cJ} u_{0,\bola}\xi_{\bola},\
f(t)&=\sum_{\bola\in \cJ} f_{\bola}(t)\xi_{\bola},\
g_{\ell}(t)=\sum_{\bola\in \cJ} g_{\ell,\bola}(t)\xi_{\bola},\\
u(t)&=\sum_{\bola\in \cJ} u_{\bola}(t)\xi_{\bola}.
\end{split}
\end{equation}
Substituting into \eqref{evolII.1} and using \eqref{st-int-25},
we conclude that the (deterministic) functions $u_{\bola}(t)$
 satisfy
 \begin{equation}
 \label{eq:propag}
 \begin{split}
 u_{\bola}(t)&=u_{0,\bola}+
 \int_0^t(Au_{\bola}(s)+f_{\bola}(s))ds\\
 &+
 \sum_{k,\ell} \sqrt{\alpha_{k\ell}}\int_0^t
 (M_{\ell}u_{\bola-\bole(k,\ell)}(s)+
 g_{\ell,\bola-\bole(k,\ell)}(s))\wt{m}_{k\ell}(s)ds.
 \end{split}
 \end{equation}

\begin{definition}
A {\tt chaos solution} of equation \eqref{evolII.1}
is a formal series $u(t)=\sum_{\bola\in \cJ} u_{\bola}(t)\xi_{\bola}$,
where the random variables
$\xi_{\bola}$ are defined by \eqref{eq:xi-al} and the deterministic
functions
$u_{\bola}$ satisfy \eqref{eq:propag}.
\end{definition}

\begin{theorem}
Assume that
\begin{itemize}
\item
  for every $U_0\in H$ and $F\in L_2((0,T);X')$,
the deterministic evolution equation
\begin{equation}
\label{det-eq}
U(t)=U_0+\int_0^t AU(s)ds+\int_0^t F(s)ds
\end{equation}
has a unique solution $U\in L_2((0,T); X)$;
\item
\begin{equation}
\mathrm{ess}\sup_{t\in (0,T)}
|\wt{m}_{k\ell}(t)|<\infty \label{sup-m}
\end{equation}
\end{itemize}
Then equation \eqref{evolII.1} has a unique chaos solution
and every $u_{\bola}$ is an element of
$L_2((0,T);X)$.
\end{theorem}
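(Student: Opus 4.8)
The plan is to solve the propagator system \eqref{eq:propag} recursively by induction on $|\bola|$ and then verify that the resulting functions $u_{\bola}$ lie in $L_2((0,T);X)$. First I would observe that \eqref{eq:propag}, for a fixed $\bola$, is an instance of the deterministic equation \eqref{det-eq} with the driving data
$$
F_{\bola}(t)=f_{\bola}(t)+\sum_{k,\ell}\sqrt{\alpha_{k\ell}}\,
\bigl(M_{\ell}u_{\bola-\bole(k,\ell)}(t)+g_{\ell,\bola-\bole(k,\ell)}(t)\bigr)\wt{m}_{k\ell}(t),
$$
and initial condition $U_0=u_{0,\bola}$. The base case $|\bola|=0$ is immediate: the sum over $(k,\ell)$ is empty (all $\alpha_{k\ell}=0$), so $u_{\boz}$ solves \eqref{det-eq} with $F=f_{\boz}\in L_2((0,T);X')$ and $U_0=u_{0,\boz}\in H$, which has a unique solution in $L_2((0,T);X)$ by hypothesis. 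For the inductive step, suppose $u_{\bolb}\in L_2((0,T);X)$ for all $\bolb$ with $|\bolb|<|\bola|$. Only finitely many terms appear in the sum defining $F_{\bola}$, namely those $(k,\ell)$ with $\alpha_{k\ell}>0$, and $|\bola-\bole(k,\ell)|=|\bola|-1$, so each $u_{\bola-\bole(k,\ell)}$ is available from the previous stage. I then need $F_{\bola}\in L_2((0,T);X')$: the term $f_{\bola}\in L_2((0,T);X')$ by assumption; $M_{\ell}u_{\bola-\bole(k,\ell)}\in L_2((0,T);X')$ since $M_{\ell}$ maps $L_2((0,T);X)$ boundedly into $L_2((0,T);X')$; $g_{\ell,\bola-\bole(k,\ell)}\in L_2((0,T);X')$ by assumption; and multiplication by $\wt{m}_{k\ell}$ preserves $L_2((0,T);X')$ because \eqref{sup-m} gives $\wt{m}_{k\ell}\in L_\infty((0,T))$. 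Hence \eqref{det-eq} yields a unique $u_{\bola}\in L_2((0,T);X)$, completing the induction.

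Uniqueness of the chaos solution follows from the same recursion run in reverse: if $u$ and $v$ are two chaos solutions, then $w_{\bola}:=u_{\bola}-v_{\bola}$ satisfies the homogeneous version of \eqref{eq:propag} (with $u_{0,\bola}$, $f_{\bola}$, $g_{\ell,\bola}$ all replaced by zero). For $|\bola|=0$ this is \eqref{det-eq} with zero data, so $w_{\boz}=0$ by the uniqueness hypothesis. Inductively, if $w_{\bolb}=0$ for all $\bolb$ with $|\bolb|<|\bola|$, then the driving term in the equation for $w_{\bola}$ vanishes, so again $w_{\bola}=0$. Thus every coefficient vanishes and the chaos solution is unique as a formal series.

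I expect the routine part to be checking the measurability/integrability bookkeeping — that each $F_{\bola}$ genuinely lands in $L_2((0,T);X')$ and that the finite sums cause no trouble — and the main (mild) obstacle to be making sure the recursion is well-founded, i.e.\ that at each $\bola$ one only ever references coefficients $u_{\bolb}$ with strictly smaller $|\bolb|$. This is guaranteed because $\bole(k,\ell)$ always has length one and only enters with a nonzero coefficient $\sqrt{\alpha_{k\ell}}$ when $\alpha_{k\ell}\geq 1$, so $\bola-\bole(k,\ell)\in\cJ$ with $|\bola-\bole(k,\ell)|=|\bola|-1$; since $\cJ$ is graded by $|\cdot|$ with finite levels, the induction is legitimate. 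No contraction-mapping or fixed-point argument is needed — the triangular (lower-triangular in the grading) structure of the propagator does all the work, which is exactly why the chaos approach is convenient here.
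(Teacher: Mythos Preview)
Your proposal is correct and follows essentially the same approach as the paper's own proof: induction on $|\bola|$, recognizing each propagator equation \eqref{eq:propag} as an instance of the deterministic equation \eqref{det-eq} with forcing term in $L_2((0,T);X')$ thanks to \eqref{sup-m}, the boundedness of $M_{\ell}$, and the finiteness of the sum. Your treatment is slightly more detailed than the paper's (you spell out the uniqueness argument separately and itemize why each piece of $F_{\bola}$ lies in $L_2((0,T);X')$), but the underlying argument is identical.
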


\begin{proof}
Note that, for $|\bola|=0$, \eqref{eq:propag} is
\begin{equation}
\label{eq:II00}
u_{\boz}(t)=u_{0,\boz}+\int_0^tAu_{\boz}(s)ds+
\int_0^tf_{\boz}(s)ds;
\end{equation}
by assumption, equation \eqref{eq:II00} has
a unique solution  $u_{\boz}\in L_2((0,T); X)$.

We now proceed by induction on $|\bola|$.
Assume that \eqref{eq:propag} has a unique
solution $u_{\bola}\in L_2((0,T); X)$ for all
$\bola$ with $|\bola|\leq n$. Then, for
$\bola$ with $|\bola|=n+1$ we have
$|\bola-\bole(k,\ell)|=n$ so that
$M_{\ell}u_{\bola-\bole(k,\ell)}\wt{m}_{k\ell}\in L_2((0,T); X')$.
With only finitely many terms in the sum
on the right-hand side of \eqref{eq:propag}, the assumptions
of the theorem now imply that \eqref{eq:propag}
has a unique
solution $u_{\bola}\in L_2((0,T); X)$ for all
$\bola$ with $|\bola|= n+1$.
\end{proof}

 {\rm In general, condition \eqref{sup-m} is
necessary:  there is no guarantee that
$f\wt{m}_{k\ell}\in L_2((0,T);X')$ for every
$f\in L_2((0,T);X')$. More information about the
operator $A$ makes it possible to remove this
condition; see Theorem \ref{th:main-par} below.

Assume that $\mfX_{\ell}(\chi_t)=\int_0^tK_{\ell}(t,s)dW_{\ell}(s).$
Then a sufficient condition
for \eqref{sup-m} to hold is
\begin{equation}
\label{bnddII}
\mathrm{ess}\sup_{t\in (0,T)}
\left(K_{\ell}(t,t)+ \int_0^t\left|
\frac{\partial K_{\ell}(t,s)}{\partial t}\right|ds\right)
<\infty;
\end{equation}
this is the case for the fractional Brownian motion with
$H>1/2$ and for the Ornstein-Uhlenbeck process
(stable or unstable).}

A theory of solutions of
\eqref{evolII.1} in weighted chaos spaces
can be developed in complete analogy with
\cite{LR2,LR1}; we leave this development to an interested reader.
Instead, we will investigate when the chaos
solution is in fact square-integrable, that is, when no
weights are necessary. This investigation will lead to an extension of
condition \eqref{nonexpl} on page \pageref{nonexpl}
to equations with variable coefficients.

Denote by $\nk_{\ell}$ the norm of the operator $\cK_{\ell}$
in $L_2((0,T))$.

\begin{theorem}
\label{th:main-par}
 Assume that
\begin{enumerate}
\item The initial condition  $u_0$ and the processes
$f,g_{\ell}$ are deterministic and
$$
\sum_{\ell\geq 1}\int_0^T
\mathfrak{K}_{\ell}^2\|g_{\ell}(t)\|_{X'}^2dt<\infty;
$$
\item There exist positive numbers $\delta_A$ and
 $C_A$ such that, for all $v\in X$ and $t\in [0,T]$,
\begin{equation}
\|A(t)v\|_{X'}\leq C_A\|v\|_X,\ \ (A(t)v,v)+
\delta_A\|v\|_X^2\leq C_A\|v\|_H^2.
\label{parab-det}
\end{equation}
\item There exist a non-negative number $\delta_0< \delta_A$ and
a positive number $C_0$ such that, for all $v\in X$ and $t\in [0,T]$,
\begin{equation}
2(A(t)v,v)+\sum_{\ell\geq 1} \nk_{\ell}^2\|M_{\ell}(t)v\|^2_H+
\delta_0\|v\|_X^2\leq C_0\|v\|_H^2.
\label{parab-gen}
\end{equation}
\end{enumerate}
Then the chaos solution of \eqref{evolII.1} satisfies
\begin{equation}
\label{eq:main-ineq}
\begin{split}
\sup_{0<t<T}\bE\|u(t)\|_H^2+\delta_0\int_0^T
\bE\|u(t)\|_X^2dt& \leq C(C_A,\delta_A,C_0,T)
\Big(\|u_0\|_H^2+\int_0^T\|f(t)\|_{X'}^2dt\\
&+
\sum_{\ell\geq 1}\mathfrak{K}_{\ell}^2
\int_0^T\|g_{\ell}(t)\|_{X'}^2dt\Big).
\end{split}
\end{equation}
\end{theorem}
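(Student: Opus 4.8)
The plan is to derive an a priori energy estimate for the chaos coefficients $u_{\bola}$ satisfying the propagator system \eqref{eq:propag}, and then sum the resulting bounds over all $\bola \in \cJ$ to recover \eqref{eq:main-ineq}. First I would rewrite \eqref{eq:propag} in differential form: each $u_{\bola}$ solves the deterministic evolution equation
$$
\dot u_{\bola}(t) = A(t)u_{\bola}(t) + f_{\bola}(t) + \sum_{k,\ell}\sqrt{\alpha_{k\ell}}\,\bigl(M_{\ell}(t)u_{\bola-\bole(k,\ell)}(t) + g_{\ell,\bola-\bole(k,\ell)}(t)\bigr)\wt m_{k\ell}(t),
$$
with $u_{\bola}(0) = u_{0,\bola}$ (which is $u_0$ if $|\bola|=0$ and $0$ otherwise, by the hypothesis that the data are deterministic). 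Testing against $u_{\bola}(t)$ in the $H$-inner product and using the coercivity assumption \eqref{parab-det} gives, for each $\bola$,
$$
\frac{d}{dt}\|u_{\bola}(t)\|_H^2 + 2\delta_A\|u_{\bola}(t)\|_X^2 \le 2C_A\|u_{\bola}(t)\|_H^2 + 2(f_{\bola},u_{\bola}) + 2\sum_{k,\ell}\sqrt{\alpha_{k\ell}}\,\bigl(M_{\ell}u_{\bola-\bole(k,\ell)}+g_{\ell,\bola-\bole(k,\ell)},u_{\bola}\bigr)\wt m_{k\ell}(t).
$$

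The crucial point is how to bound the stochastic coupling term after summing over $\bola$. For a fixed pair $(k,\ell)$, the term $\sqrt{\alpha_{k\ell}}\,u_{\bola-\bole(k,\ell)}$ reindexes over $\bola$ as $\sqrt{\alpha_{k\ell}+1}\,u_{\bola}$, so that Parseval-type sums over $\cJ$ of $\sum_{k,\ell}\alpha_{k\ell}|\text{(coefficient)}|^2$ reorganize cleanly. The key algebraic identity I would exploit is that $\sum_{k}\wt m_{k\ell}^2(t)$ relates to $\|\cK_\ell\|^2 = \nk_\ell^2$ via Bessel's inequality applied to the operator $\cK_\ell$ acting on the orthonormal basis $\{m_k\}$ — more precisely, for any $v\in H$, $\sum_k \nk_\ell^{-2}\bigl|\sum_j \wt m_{k\ell}(t) c_j\bigr|^2 \le \sum_j c_j^2$ type bounds, which is exactly what converts $\sum_{k}\wt m_{k\ell}$-weighted sums of $\|M_\ell u_{\bola-\bole(k,\ell)}\|$ into $\nk_\ell^2\|M_\ell u_{\bola}\|^2$ after summing over the characteristic set. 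Applying Cauchy–Schwarz on the coupling term to split off a $\delta_0\|u_{\bola}\|_X^2$ piece (absorbable by the left side since $\delta_0 < \delta_A$) against a $\sum_{k,\ell}\alpha_{k\ell}\nk_\ell^2\|M_\ell u_{\bola-\bole(k,\ell)}\|_H^2$ piece, then summing over $\bola$ and invoking \eqref{parab-gen} to control the combined drift-plus-noise quadratic form by $C_0\|u_{\bola}\|_H^2$, yields a Gronwall-ready differential inequality for $\Phi(t):=\sum_{\bola}\bE\|u_{\bola}(t)\|_H^2 = \bE\|u(t)\|_H^2$:
$$
\frac{d\Phi}{dt} + \delta_0\sum_{\bola}\|u_{\bola}(t)\|_X^2 \le C\,\Phi(t) + C\Bigl(\sum_{\bola}\|f_{\bola}(t)\|_{X'}^2 + \sum_{k,\ell}\nk_\ell^2\|g_{\ell,\bola}(t)\|_{X'}^2\Bigr).
$$
Gronwall's lemma and integration in $t$ then give \eqref{eq:main-ineq} with $C = C(C_A,\delta_A,C_0,T)$, and the deterministic-data hypothesis ensures $\sum_{\bola}\|f_{\bola}\|^2 = \|f\|^2$ concentrates on $\bola=\boz$ while $\sum_{\bola}\nk_\ell^2\|g_{\ell,\bola}\|^2 = \nk_\ell^2\|g_\ell\|^2$, matching the right-hand side.

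The main obstacle is the correct bookkeeping of the index shift $\bola \leftrightarrow \bola - \bole(k,\ell)$ together with the Bessel/operator-norm estimate, and in particular verifying that the cross term genuinely produces the combination $2(A(t)v,v) + \sum_\ell \nk_\ell^2\|M_\ell(t)v\|_H^2$ that hypothesis \eqref{parab-gen} is tailored to absorb — this is where the precise constants $\delta_0 < \delta_A$ must be used to close the estimate without any loss. A secondary technical point is justifying the termwise differentiation and the interchange of summation over $\cJ$ with the time integral and the $\limsup$; since each $u_{\bola}$ already lies in $L_2((0,T);X)$ by the previous theorem and the sums over $(k,\ell)$ are finite for each fixed $\bola$, this can be handled by first truncating to $|\bola|\le N$, deriving the estimate uniformly in $N$, and passing to the limit via monotone convergence.
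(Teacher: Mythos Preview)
There is a genuine gap at the step you call the ``Bessel/operator-norm estimate.'' The bound $\nk_\ell=\|\cK_\ell\|_{L_2\to L_2}$ controls quantities of the form
\[
\sum_k\Big|\int_0^T h(s)\,\wt m_{k\ell}(s)\,ds\Big|^2
=\sum_k\big|(\cK_\ell^*h,m_k)_{L_2}\big|^2
=\|\cK_\ell^*h\|_{L_2}^2\le \nk_\ell^2\|h\|_{L_2}^2,
\]
i.e.\ it is an estimate on \emph{time-integrated} inner products against $\wt m_{k\ell}$, with $h$ independent of $k$. In your energy inequality the coupling term appears as a \emph{pointwise} product $\wt m_{k\ell}(t)\,(M_\ell u_{\bola-\bole(k,\ell)}(t),u_{\bola}(t))$, and the integrand depends on $k$ through $u_{\bola-\bole(k,\ell)}$. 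Neither $\sum_k|\wt m_{k\ell}(t)|^2\le\nk_\ell^2$ holds pointwise, nor does the Parseval identity above apply once the test function carries its own $k$-dependence. Consequently the split you sketch, into $\delta_0\|u_{\bola}\|_X^2$ plus $\sum_{k,\ell}\alpha_{k\ell}\nk_\ell^2\|M_\ell u_{\bola-\bole(k,\ell)}\|_H^2$, cannot be obtained from the operator norm alone, and the combination needed to invoke \eqref{parab-gen} never materializes.

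The paper avoids this obstruction by \emph{not} doing a direct energy estimate on the propagator system. Instead it iterates \eqref{eq:propag} via the semi-group $\Phi_{t,s}$ generated by $A$, obtaining for $|\bola|=n$ an explicit $n$-fold time integral \eqref{eq:ind_sg} in which the entire $k$-dependence is isolated in a separate scalar factor $\overline{m}_{\bola}(s^{(n)})$ while the $H$-valued kernel $G$ depends only on $(\ell_1,\dots,\ell_n)$. Summing $\|u_{\bola}(t)\|_H^2$ over all $\bola$ with a fixed $\ell$-pattern is then a genuine Parseval sum in $L_2((0,T)^n)$, which is exactly where the factors $\prod_j\nk_{\ell_j}^2$ in \eqref{mult-int} come from. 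Only after this decoupling does the paper differentiate in $t$ and use \eqref{parab-gen} to produce the telescoping inequality \eqref{eq:LinHp3}, sum over $n$, and apply Gronwall. The semi-group representation is therefore not a stylistic choice but the mechanism that converts the $L_2$-operator norm of $\cK_\ell$ into the coefficient in front of $\|M_\ell v\|_H^2$; your direct approach lacks a substitute for this step.
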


\begin{proof}
By assumptions of the theorem, \eqref{eq:propag} takes the form
\bel{s-system1}
\begin{split}
u_{\boz}(t)&=u_0+\int_0^t A u_{\boz}(s)ds
+\int_0^t f(s)ds,\ |\bola|=0;\\
u_{\bole(ij)}(t)&=
\int_0^t A u_{\bole(ij)}(s)ds+
\int_0^t \Big(
M_{j}u_{\boz}(s)+g_{j}(s)
\Big)\wt{m}_{ij}(s)ds,\ |\bola|=1;\\
u_{\bola}(t)&=
\int_0^t A u_{\bola}(s)ds
+ \sum_{k,\ell=1}^{\infty}\sqrt{\alpha_{k\ell}}
\int_0^tM_{\ell} u_{\bola-\bole(k\ell)}(s)
\wt{m}_{k\ell}(s)ds, \ |\bola|>1.
\end{split}
\ee
By \eqref{parab-det}, the operator $A$ generates a semi-group
$\Phi_{t,s}$, and the solution of \eqref{det-eq} is
$$
U(t)=\Phi_{t,0}U_0+\int_0^t\Phi_{t,s}F(s)ds.
$$
By induction on $|\bola|$ we conclude that if
$|\bola|=n$, $\{(k_1, \ell_1), \ldots, (k_n,\ell_n)\}$
is the characteristic set of $\bola$, and
$\mathcal{P}_n$ is the set of all permutations
 of $\{1,2,\ldots,n\}$, then
\begin{equation}
\begin{split}
\label{eq:ind_sg}
u_{\bola}(t)=
&\frac{1}{\sqrt{\bola!}}\sum_{\sigma \in \cP_n}
\int_0^t\int_0^{s_{n}}\ldots \int_0^{s_2}
\Phi_{t,s_n}M_{\ell_{\sigma(n)}}\cdots
\Phi_{s_2,s_1}\Big(M_{\ell_{\sigma(1)}}u_{\boz}(s_1)\\
&+g_{\ell_{\sigma(1)}}(s_1)\Big)
(\cK_{\ell_{\sigma(k)}} m_{k_{\sigma(n)}})(s_n) \cdots
(\cK_{\ell_{\sigma(1)}} m_{k_{\sigma(1)}})(s_1)
ds^n,
\end{split}
\end{equation}
where $ds^n=ds_1\ldots ds_n$.
We then re-write (\ref{eq:ind_sg}) as
\bel{eq:ind_sg1}
u_{\bola}(t)=
\int_{[0,T]^n}G(t,\ell^{(n)};s^{(n)})
\overline{m}_{\bola}(s^{(n)})ds^n,
\ee
where
\begin{equation}
\begin{split}
G(t,\ell^{(n)};s^{(n)})
&=\frac{1}{\sqrt{n!}}\sum_{\sigma\in \mathcal{P}_n}
\Phi_{t-s_{\sigma(n)}}M_{\ell_n}\cdots
\Phi_{s_{\sigma(2)}-s_{\sigma(1)}}
\Big( M_{\ell_1}u_{\boz}(s_{\sigma(1)})\\
&+g_{\ell_1}(s_{\sigma(1)})\Big)
\chi_{s_{\sigma(2)}}(s_{\sigma(1)})
\cdots\chi_t(s_{\sigma(n)}),
\end{split}
\end{equation}
and
\begin{equation}
\overline{m}_{\bola}(s^{(n)})
=\frac{1}{\sqrt{\bola!\,n!}}\sum_{\sigma \in\mathcal{P}_n}
 (\cK_{\ell_1} h_{k_1})(s_{\sigma(1)})
\cdots (\cK_{\ell_n} h_{k_n})(s_{\sigma(n)}).
\end{equation}
From (\ref{eq:ind_sg1}) and the definition of the
function $G$, we conclude that
\begin{equation}
\label{mult-int}
\begin{split}
\sum_{\stackrel{\scriptstyle{\bola\in\cJ}}{|\bola|=n}}
&\|u_{\bola}(t)\|^2_H
\leq \sum_{\ell_1,\ldots,\ell_n=1}^{\infty}
\left(\prod_{j=1}^n \mathfrak{K}_{\ell_j}^2\right)
 \int_0^t\int_0^{s_{n}}\ldots \int_0^{s_2}
 \\ & \left\|\Phi_{t,s_n}M_{\ell_{n}}\cdots
\Phi_{s_2,s_1}\Big(M_{\ell_{1}}u_{\boz}(s_1)\right.
\left.+g_{\ell_1}(s_1)\Big)\right\|_H^2
ds^n.
\end{split}
\end{equation}
Similarly,
\begin{equation}
\label{mult-intTX}
\begin{split}
\sum_{\stackrel{\scriptstyle{\bola\in\cJ}}{|\bola|=n}}
&\int_0^t\|u_{\bola}(t)\|^2_Xds
\leq \sum_{\ell_1,\ldots,\ell_n=1}^{\infty}
\left(\prod_{j=1}^n \mathfrak{K}_{\ell_j}^2\right)
 \int_0^t\int_0^s\int_0^{s_{n}}\ldots \int_0^{s_2}
 \\ & \left\|\Phi_{s,s_n}M_{\ell_{n}}\cdots
\Phi_{s_2,s_1}\Big(M_{\ell_{1}}u_{\boz}(s_1)\right.
\left.+g_{\ell_1}(s_1)\Big)\right\|_X^2
ds^nds.
\end{split}
\end{equation}
For $n\geq 1$, denote by  $F_{n}^H(t)$ and $F_{n}^X(t)$
the right-hand sides of \eqref{mult-int} and \eqref{mult-intTX},
respectively. For $n=0$, define
$F_0^H(t)= \|u_{\boz}(t)\|_H^2$,
$F_0^X(t)=\int_0^t\|u_{\boz}(s)\|_X^2ds$. Then
\bel{IIaux00}
\bE\|u(t)\|_H^2\leq \sum_{n=0}^{\infty}F_n^H(t),\
\int_0^T\bE\|u(t)\|_X^2dt \leq \sum_{n=0}^{\infty} F_{n}^X(T).
\ee
For brevity, introduce the notation
$$
\bI= \|u_0\|_H^2+\int_0^T\|f(t)\|_{X'}^2dt
+\sum_{\ell\geq 1}\mathfrak{K}_{\ell}^2
\int_0^T\|g_{\ell}(t)\|_{X'}^2dt.
$$
Then assumption \eqref{parab-det} implies
 \bel{IIaux1}
 \sup_{0<t<T}F_0^H(t) + \delta_0 F_0^X(T)
 \leq C_1(C_A,\delta_A,T)\,\bI.
\ee

For $n\geq 1$, we find using \eqref{parab-gen}
that
\begin{equation}
\label{eq:LinHp3}%
\begin{split}
&  \frac{dF_{n}^H(t)}{dt}+\delta_0\,
\frac{dF_{n}^X(t)}{dt}
 \leq C_{0}F_{n}^H(t)\\
+  &  \sum_{\ell_{1}, \ldots, \ell_{n}\geq1}
\left(\prod_{j=1}^n \mathfrak{K}_{\ell_j}^2\right)
 \int_{0}^{t} \int_{0}^{s_{n-1}}%
\ldots\int_{0}^{s_{2}} \\
&\|{{{M}}}_{\ell_{n}} \Phi_{t,s_{n-1}%
}{{{M}}}_{\ell_{n-1}} \ldots
\Phi_{s_2,s_1}\Big(M_{\ell_{1}}u_{\boz}(s_1)
+g_{\ell_1}(s_1)\Big)\|_{H}^{2}ds^{n-1}\\
-  &  \sum_{\ell_{1}, \ldots, \ell_{n+1}\geq1}
\left(\prod_{j=1}^{n+1} \mathfrak{K}_{\ell_j}^2\right)
\int_{0}^{t} \int_{0}^{s_{n}}%
\ldots\int_{0}^{s_{2}} \\
&\|{{{M}}}_{\ell_{n+1}} \Phi_{t,s_{n}%
}{{{M}}}_{\ell_{n}}
 \ldots\Phi_{s_2,s_1}\Big(M_{\ell_{1}}u_{\boz}(s_1)
+g_{\ell_1}(s_1)\Big)\|_{H}^{2}ds^{n}.
\end{split}
\end{equation}
Then, after summation in $n$ and integration in time,
$$
\sum_{n=1}^N\big(F_{n}^H(t)+\delta_0
F_{n}^X(t)\big)\leq C_0\int_0^t
\sum_{n=1}^N F_{n}^H(s)ds + C_2(C_A,\delta_A,T)\,\bI
$$
for every $N\geq 1$.
Applying Gronwal's inequality,
we get  \eqref{eq:main-ineq} from \eqref{IIaux00}.
Theorem \ref{th:main-par} is proved.
\end{proof}

\begin{example} {\rm Let $X$ be the Sobolev space $H^1(\bR^d)$ and
$H=L_2(\bR^d)$. Consider the following equation driven by a
single fractional Brownian motion $W^H$ with $H\geq 1/2$
[there should be no difficulty distinguishing between $H$ a space
and $H$ a Hurst parameter]:
$$
du(t,x)=\sum_{i,j=1}^d a_{ij}(t,x)\frac{\partial^2u(t,x)}{\partial x_i
\partial x_j}\,dt+
\sum_{i=1}^d\sigma_i(t,x)\frac{\partial u(t,x)}{\partial x_i}
\diamond dW^H(t),\ 0<t\leq T.
$$
In this case, $\nk^2=2H\, 2^{1-2H}\,T^{2H-1}$: see
\eqref{fbm-bound} on page \pageref{fbm-bound}.
Condition \eqref{parab-gen} becomes
\bel{parab-fBM1}
\delta_0|y|^2\leq \sum_{i,j=1}^d \Big(
a_{ij}(t,x)-H2^{1-2H}T^{2H-1}\sigma_i(t,x)
\sigma_j(t,x)\Big)y_iy_j\leq C_0|y|^2
\ee
for all $t\in [0,T]$ and all $x,y\in \bR^d$.
Let us now compare \eqref{parab-fBM1} with \eqref{parab-fBM}
on page \pageref{parab-fBM}
if $a,\sigma$ are constants and $d=1$.
If $H=1/2$,  then
 \eqref{parab-fBM1} becomes \eqref{parab-cl}, which is
\eqref{parab-fBM} with $H=1/2$
 (recall that $\delta_0$ can be zero).
If $H>1/2$, then \eqref{parab-fBM1} becomes
$2a/\sigma^2\geq H2^{2-2H}T^{2H-1}$, which is slightly stronger than
\eqref{parab-fBM} because $1<H2^{2-2H}<1.07$ for
$1/2<H<1$.}
\end{example}

\begin{example} {\rm
Let $X(t)=\mfX(\chi_t)$ and consider the equation
\begin{equation}
 \label{eq:ex6.5.1}
du(t,x)=au_{xx}dt+\sigma u_x\diamond dX(t),\ x\in \bR,\
0<t\leq T,
\end{equation}
with constant $a,\sigma$.  Condition \eqref{parab-gen}
 in this case is
 \begin{equation}
 \label{eq:ex6.5.2}
 a\geq \frac{\sigma^2}{2}\, \mathfrak{K}^2.
 \end{equation}
Recall condition \eqref{nonexpl} on page \pageref{nonexpl},
which was derived from the closed-form solution of
equation \eqref{eq:ex6.5.1} and is both necessary and sufficient
for \eqref{eq:ex6.5.1} to
 have a square-integrable solution.
 We conclude that, for equations with constant
coefficients,
\eqref{eq:ex6.5.2} should imply \eqref{nonexpl}, but
not necessarily the other way around.
As a result, comparison of \eqref{nonexpl} and
\eqref{eq:ex6.5.2} produces a lower bound on the
operator norm
$\mathfrak{K}$ for the field $\mfX$
 in terms of the covariance function $R$ of the associated
process $X$:
$$
\mathfrak{K}\geq \sup_{0<t<T}\frac{R(t,t)}{t}.
$$
}
\end{example}



\begin{thebibliography}{10}

\bibitem{AMN1}
E.~Al\`{o}s, O.~Mazet, and D.~Nualart.
\newblock {Stochastic Calculus With Respect to Fractional Brownian Motion with
  Hurst Parameter Less Than $\frac{1}{2}$}.
\newblock {\em Stochastic Process. Appl.}, 86(1):121--139, 2000.

\bibitem{AMN}
E.~Al\`{o}s, O.~Mazet, and D.~Nualart.
\newblock {Stochastic Calculus With Respect to Gaussian Processes}.
\newblock {\em Ann. Probab.}, 29(2):766--801, 2001.

\bibitem{A}
A.~Amirdjanova.
\newblock {Nonlinear Filtering with Fractional Brownian Motion}.
\newblock {\em Appl. Math. Optim.}, 46(2--3):81--88, 2002.

\bibitem{DH}
W.~Dai and C.~C. Heyde.
\newblock {It\^{o}'s Formula with Respect to Fractional Brownian Motion and its
  Application}.
\newblock {\em J. Appl. Math. Stochastic Anal.}, 9(4):439--448, 1996.

\bibitem{DU}
L.~Decreusefond and A.~S. \"{U}st\"{u}nel.
\newblock {Stochasic Analysis of the Fractional Brownian Motion}.
\newblock {\em Potential Anal.}, 10(2):177--214, 1998.

\bibitem{DHP}
T.~E. Duncan, Y.~Hu, and B.~Pasik-Duncan.
\newblock {Stochastic Calculus for Fractional Brownian Motion I: Theory}.
\newblock {\em SIAM J. Control Optim.}, 38(2):582--612, 2000.

\bibitem{Dunc1}
T.~E. Duncan, B.~Maslowski, and B.~Pasik-Duncan.
\newblock Stochastic equations in {H}ilbert space with a multiplicative
  fractional {G}aussian noise.
\newblock {\em Stochastic Process. Appl.}, 115(8):1357--1383, 2005.

\bibitem{HOUZ}
H.~Holden, B.~{\O}ksendal, J.~Ub{\o}e, and T.~Zhang.
\newblock {\em {Stochastic Partial Differential Equations: A Modeling, White
  Noise Functional Approach}}.
\newblock Birkh\"{a}user, 1996.

\bibitem{KBR}
M.~L. Kleptsyna, A.~{Le} Breton, and M.-C. Roubaud.
\newblock {General Approach to Filtering with Fractional Brownian Noises:
  Application to Linear Systems}.
\newblock {\em Stochastics Stochastics Rep.}, 71(1--2):119--140, 2000.

\bibitem{Lin}
S.~J. Lin.
\newblock {Stochastic Analysis of Fractional Brownian Motions}.
\newblock {\em Stochastics Stochastics Rep.}, 55(1--2):121--140, 1995.

\bibitem{LR2}
S.~V. Lototsky and B.~L. Rozovskii.
\newblock {Stochastic Differential Equations: a Wiener Chaos Approach}.
\newblock In Yu. Kabanov, R.~Liptser, and J.~Stoyanov, editors, {\em {From
  Stochastic Calculus to Mathematical Finance: The Shiryaev Festschrift}},
  pages 433--507. Springer, 2006.

\bibitem{LR1}
S.~V. Lototsky and B.~L. Rozovskii.
\newblock {Wiener Chaos Solutions of Linear Stochastic Evolution Equations}.
\newblock {\em Ann. Probab.}, 34(2):638--662, 2006.

\bibitem{Maj}
P.~Major.
\newblock {\em {Multiple Wiener-It{\^{o}} Integrals. With Applications to Limit
  Theorems}}, volume 849 of {\em {Lecture Notes in Mathematics}}.
\newblock Springer, 1981.

\bibitem{Nualart}
D.~Nualart.
\newblock {\em The {M}alliavin calculus and related topics}.
\newblock Probability and its Applications (New York). Springer-Verlag, Berlin,
  second edition, 2006.

\bibitem{PT}
V.~Pipiras and M.~S. Taqqu.
\newblock {Integration Questions Related to Fractional Brownian Motion}.
\newblock {\em Probab. Theory Related Fields}, 118(2):251--291, 2000.

\bibitem{Watanabe}
S.~Watanabe.
\newblock {\em Lectures on stochastic differential equations and {M}alliavin
  calculus}, volume~73 of {\em Tata Institute of Fundamental Research Lectures
  on Mathematics and Physics}.
\newblock Published for the Tata Institute of Fundamental Research, Bombay,
  1984.
\newblock Notes by M. Gopalan Nair and B. Rajeev.

\end{thebibliography}

\end{document}